\documentclass[12pt]{article}
\usepackage{amsmath}
\usepackage{float}
\usepackage{amsthm}
\usepackage{amssymb}
\usepackage[utf8x]{inputenc}
\usepackage{caption}
\usepackage{graphics}
\usepackage{enumerate} 
\usepackage{algorithmic}
\usepackage[Algorithm,ruled]{algorithm}
\usepackage{titlesec}
\usepackage{sectsty}
\titleformat*{\section}{\LARGE\bfseries}
\titleformat*{\subsection}{\Large\bfseries}
\titleformat*{\subsubsection}{\large\bfseries}
\sectionfont{\Huge}
\subsectionfont{\large}
\subsubsectionfont{\normalsize}
\usepackage{multicol}
\usepackage{lmodern}
\usepackage{marvosym} 
\usepackage{lipsum}
\usepackage{mwe}
\usepackage{caption}
\usepackage{subcaption} 
\newtheoremstyle{case}{}{}{}{}{}{:}{ }{}
\theoremstyle{case}

\newcommand{\be}{\begin{equation}}
\newcommand{\ee}{\end{equation}}
\newcommand{\ben}{\begin{eqnarray*}}
\newcommand{\een}{\end{eqnarray*}}
\newtheorem{examp}{\sc Example}
\newtheorem{remk}{\sc Remark}
\newtheorem{corol}{\sc Corollary}
\newtheorem{lemma}{\sc lemma}
\newtheorem{theorem}{\sc theorem}
\newtheorem{defn}{\sc definition}
\newcommand{\bt}{\begin{theorem}}
\newcommand{\et}{\end{theorem}}
\newcommand{\bl}{\begin{lemma}}
\newcommand{\el}{\end{lemma}}
\newcommand{\bed}{\begin{defn}}
\newcommand{\eed}{\end{defn}}
\newcommand{\brem}{\begin{remk}}
\newcommand{\erem}{\end{remk}}
\newcommand{\bex}{\begin{examp}}
\newcommand{\eex}{\end{examp}}
\newcommand{\bcl}{\begin{corol}}
\newcommand{\ecl}{\end{corol}}

\topmargin -0.3cm \evensidemargin -0.2cm \oddsidemargin -0.2cm
\textheight 9in \textwidth 6in

\newcommand{\NI}{\noindent}

\newcommand{\vsp}{\vskip 0.5em}


\theoremstyle{definition}
\theoremstyle{remark}

\numberwithin{equation}{section}
\numberwithin{theorem}{section}
\numberwithin{lemma}{section}


\begin{document}
\title{\large {\bf{\sc Solution Of Tensor Complementarity Problem Using Homotopy Function}}}
\author{ A. Dutta$^{a, 1}$ , Bharat Kumar$^{b, 2}$, Deepmala$^{c, 3}$ and A. K. Das$^{d, 4}$\\
\emph{\small $^{a}$Department of Mathematics, Jadavpur University, Kolkata, 700 032, India}\\	
\emph{\small $^{b,c}$Department of Mathematics, PDPM-Indian Institute of Information Technology}\\
\emph{\small Design and Manufacturing, Jabalpur - 482005 (MP), India, India}\\
\emph{\small $^{d}$SQC \& OR Unit, Indian Statistical Institute, Kolkata, 700 108, India}\\
\emph{\small $^{1}$Email: aritradutta001@gmail.com}\\
\emph{\small $^{2}$Email: bharatnishad.kanpu@gmail.com}\\
\emph{\small $^{3}$Email: dmrai23@gmail.com}\\
\emph{\small $^{4}$Email: akdas@isical.ac.in} \\
 }
\date{}

\maketitle

\begin{abstract}
The paper aims to propose a suitable method in finding the solution of tensor complementarity problem. The tensor complementarity problem is a subclass of nonlinear complementarity problems for which the involved function is defined by a tensor. We propose a new homotopy function with smooth and bounded homotopy path to obtain  solution of the tensor complementarity problem under some conditions. A homotopy continuation method is developed based on the proposed homotopy function. Several numerical examples are provided to show the effectiveness of the proposed homotopy continuation method.
\vsp
\NI{\bf Keywords:} Tensor complementarity problem, homotopy function, homotopy path, bounded smooth curve, semipositive tensor, linear complementarity problem. \\

\NI{\bf AMS subject classifications:} 90C33, 90C30, 15A69.
\end{abstract}
\section{Introduction}

During last several decades, researchers are of keen interest to study the complementarity theory finding  solution of linear complementarity problem. The linear programming problem, linear fractional programming problem, convex quadratic programming problem, bimatrix game problem and quadratic multiobjective programming problem are some of the optimization problems which can be modelled to be linear complementarity problem. For details
see \cite{das2017finiteness}, \cite{article12}, \cite{article11} and \cite{article03} and references cited therein. Several matrix classes are introduced for the purpose of study of theoretical properties, applications and solution methods of complementarity problem. For details see \cite{jana2019hidden}, \cite{jana2021more}, \cite{article1}, \cite{mohan2001more}, \cite{neogy2013weak},\cite{article07}, \cite{das201} and \cite{neogy2005almost} and references cited therein. The linear complementary problem has a key role to obtain the value vector and optimal stationary strategies for discounted and undiscounded zero-sum stochastic games. For details see \cite{mondal2016discounted}, \cite{neogy2008mixture} and \cite{neogy2005linear}. The complementarity problem establishes a vital connection with multiobjective programming problem for its weighted problem and the solution point \cite{article78}. The complementarity problems are considered with respect to principal pivot transforms and pivotal method to its solution point of view. See \cite{das2016properties}, \cite{neogy2012generalized}, \cite{das2019convergence} and \cite{neogy2005principal}. The linear complementarity problem arising from a free boundary problem can be  reformulated as a fixed-point equation. Zhang \cite{zhang2021modified} presented a modified modulus-based multigrid method to solve this fixed-point equation. 
Recently there has been renewed interest in finding the solution of tensor complementarity problem. The tensor complementarity problem is considered to be a subclass of nonlinear complementarity problems with special class of polynomials using tensor.  The function involved in the tensor complementarity problem is more complex than that of the function involving matrix. It is of interest to develop numerical methods to solve tensor complementarity problem. The tensor complementarity problem has several applications. For details see \cite{du2019mixed}, \cite{zhang2019potential}, \cite{han2019continuation}, \cite{ye1997wiley},\cite{qi2019tensor}, \cite{xie2017iterative}, \cite{xu2019equivalent}, \cite{luo2017sparsest}, \cite{liu2018tensor}, \cite{huang2019tensor}, \cite{huang2017formulating}.

The purpose of the study is to develop a suitable homotopy function to find the solution of tensor complementarity problem. The basic idea of homotopy method is to construct a homotopy continuation path 
from the auxiliary mapping $g$ to the object mapping $f$. 
Suppose the given problem is to find a root of the non-linear equation $f(x) = 0$
and suppose $g(x) = 0$ is an auxiliary equation with $g(x_0)=0$. Then the
homotopy function $H:\mathbb{R}^{n+1} \to \mathbb{R}^n$ can be
 defined as $H(x, \mu) = $
 $ (1-\mu)f(x) + \mu g(x),$
 $ 0 \leq \mu \leq 1.$
Then we consider the homotopy equation $H(x, \mu) = 0,$ where $(x_0,1)$ is a known solution of the homotopy equation. Our aim is to find the solution of the equation $f(x)=0$ from the known solution of $g(x) = 0$ by solving the homotopy equation $H(x, \mu) = 0$ varrying the values of $\mu$  from $1$ to $0$. Kojima et al showed that under some conditions nonlinear complementarity problem can be solvable by homotopy continuation method. For details see \cite{kojima1991homotopy},  \cite{kojima1994global}, \cite{xuuu}.
 Han \cite{han2019continuation} introduced a homotopy continuation method for solving tensor complementarity problems. Han \cite{han2017homotopy}, Yan et al. \cite{yan2022homotopy} solve multilinear systems with  strong completely positive tensors by homotopy method.
\vsp
The paper is organized as follows. Section 2 presents some basic notations and results. In section 3, we propose a new homotopy function to find the solution of tensor complementarity problem. We construct a smooth and bounded homotopy path  obtaining the solution of the tensor complementarity problem under some conditions as the homotopy parameter $\mu$ tends to $0$. We prove necessary and sufficient conditions to obtain the solution of tensor complementarity problem from the solution of the homotopy equation. We also find the sign of the positive tangent direction of the homotopy path. We use a modified interior-point bounded homotopy path algorithm for solving the linear complementarity problem in section 4. Finally, in section 4, some numerical results are given to illustrate the effectiveness of the homotopy function.

\section{Preliminaries}
We denote $[n]= \{1, 2, \cdots, n\}$. We denote the $n$ dimensional real space by $\mathbb{R}^n$ where $\mathbb{R}^n_+$ and $\mathbb{R}^n _{++}$ denote the nonnegative and positive orthant of $\mathbb{R}^n.$  Any vector $x\in \mathbb{R}^n$ is a column vector and  $x^{t}$ denotes the row transpose of $x.$ $e$ denotes the vector of all $1.$ An $m$th order $n$th dimensional tensor $\mathcal{A} = (a_{i_1 i_2 \cdots i_m})$ is a multi-array of entries $a_{i_1 i_2 \cdots i_m}$, where $i_j \in [n]$ for $j \in [m]$. The set of all $m$th order, $n$th dimensional tensors is denoted by $ T_{m,n}$. Here we consider vectors, matrices and tensors with real entries. 

\NI For $\mathcal{A}\in T_{m,n} $ and $x\in \mathbb{R}^n,~ \mathcal{A}x^{m-2}\in \mathbb{R}^{n\times n} $ is a matrix defined by $(\mathcal{A}x^{m-2})_{ij} = \sum_{ i_3, ...i_m =1}^{n} a_{ij i_3 \cdots i_m} x_{i_3} x_{i_4} \cdots x_{i_m} , ~~~\mbox{for all}~i,j \in [n], $  $\mathcal{A}x^{m-1}\in \mathbb{R}^n $ is a vector defined by $ (\mathcal{A}x^{m-1})_i = \sum_{i_2, i_3,\cdots i_m =1}^{n} a_{i i_2 i_3 \cdots i_m} x_{i_2} x_{i_3} \cdots x_{i_m} , ~~~\mbox{for all}~i \in [n] $ and $\mathcal{A}x^m\in \mathbb{R} $ is a scalar defined by
$ x^T \mathcal{A}x^{m-1} = \mathcal{A}x^m = \sum_{i_1,i_2, i_3, ...i_m =1}^{n} a_{i_1 i_2 i_3 ...i_m} x_{i_1} x_{i_2} \cdots x_{i_m} .$

Now we consider the tensor $\mathcal{A}\in T_{m,n}$ and $ q\in {R}^n$ then the problem  is to find $x$ such that 
\begin{equation}\label{TCP}
    x\geq 0, ~~~\omega=\mathcal{A}x^{m-1} + q\geq 0, ~~~\mbox{and}~~ x^{T}(\mathcal{A}x^{m-1}+q)=0
\end{equation}
 is called a tensor complementarity problem, denoted by the TCP$(q,\mathcal{A})$.\\

\NI We define the partially symmetrized tensor of a tensor $\mathcal{A}=(a_{i_1 i_2 \cdots i_m})$ with respect to the indices $i_2,i_3,\cdots,i_m$ is defined by $\hat{\mathcal{A}}=\hat{A}_{i_1 i_2 \cdots i_m}=\frac{1}{(m-1)! }\sum_{\pi} a_{i_1 \pi(i_2 i_3 \cdots i_m)}$, where the sum is over all the permutations $\pi(i_2 i_3 \cdots i_m).$ For details see \cite{han2017homotopy}.
The partial derivative matrix of $\mathcal{A} x^{m-1}$ with respect to $x$ is
$D_x \mathcal{A}x^{m-1}=(m-1)\hat{\mathcal{A}}x^{m-2}$.
 Note that $\mathcal{A}x^{m-1}=\hat{\mathcal{A}}x^{m-1}  \forall \ x \in \mathbb{R}^n$. 
\vsp
Now we state some results which will be required in the next section.
\begin{lemma}
	(Generalizations of Sard's Theorem\cite{chow1978finding}) \ Let $U \subset \mathbb{R}^n$ be an open set and $f :\mathbb{R}^n \to \mathbb{R}^p$ be smooth. We say $y \in \mathbb{R}^p$ is a regular value for $f$ if $\text{Range} Df(x) = \mathbb{R}^p $ $\forall x \in f^{-1}(y),$ where $Df(x)$ denotes the $n \times p$ matrix of partial derivatives of $f(x).$
\end{lemma}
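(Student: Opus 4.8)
As displayed, the statement only records the definition of a regular value, so the substantive content being invoked here is the classical (parametrized) Sard theorem. The plan is therefore to establish the two facts that are actually needed later in the homotopy analysis: (i) if $y$ is a regular value of a $C^r$ map $f$ then $f^{-1}(y)$ is a $C^r$ submanifold of $U$ of dimension $n-p$ with tangent space $\ker Df(x)$ at each point; and (ii) if $F:U\times\mathbb{R}^k\to\mathbb{R}^p$ is $C^r$ with $r>\max\{0,n-p\}$ and $y$ is a regular value of $F$, then $y$ is a regular value of the sliced map $F(\cdot,a)$ for almost every parameter $a\in\mathbb{R}^k$.

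For (i) I would argue pointwise. Fix $x_0\in f^{-1}(y)$; since $Df(x_0)$ has rank $p$, after a linear change of coordinates on $\mathbb{R}^n$ we may write $x=(u,v)$ with $u\in\mathbb{R}^{n-p}$ and $v\in\mathbb{R}^{p}$ so that the $p\times p$ block $D_v f(x_0)$ is invertible. The implicit function theorem then yields a $C^r$ map $v=\varphi(u)$ near $u_0$ with $f(u,\varphi(u))=y$, and the map $(u,v)\mapsto(u,v-\varphi(u))$ is a local chart flattening $f^{-1}(y)$ near $x_0$. Carrying this out at every point of $f^{-1}(y)$ gives the manifold structure, and differentiating $f(u,\varphi(u))\equiv y$ identifies the tangent space with $\ker Df(x)$.

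For (ii) I would first invoke the classical Sard theorem: the set of critical values of a $C^r$ map $\mathbb{R}^N\to\mathbb{R}^p$ with $r\ge\max\{1,N-p+1\}$ has Lebesgue measure zero. Granting this, let $M=F^{-1}(y)$ be the manifold produced by step (i) and restrict the projection $\pi(x,a)=a$ to $M$. An elementary linear-algebra computation shows that $a$ is a regular value of $\pi|_M$ exactly when $y$ is a regular value of $F(\cdot,a)$; applying Sard to $\pi|_M$ shows the set of bad parameters $a$ has measure zero, which is the assertion. (If instead the intended reading is merely the regular-value/preimage theorem, step (i) alone suffices.)

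The main obstacle is the classical Sard estimate in the range $N>p$. One stratifies the critical set of $f$ by the order of the first non-vanishing derivative, covers each stratum by small cubes on which a Taylor expansion forces the image to lie in a thin slab, and sums volumes; the delicate point is to bound the higher-order Taylor remainder uniformly on each cube and to balance the number of cubes of a given edge length against the thinness of the image slab so that the total volume tends to $0$. This is precisely where the hypothesis $r>N-p$ is consumed. Everything else — the implicit function theorem step, the projection/Fubini reduction, and the identification of the two notions of regular value — is routine bookkeeping.
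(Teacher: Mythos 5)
You have read the situation correctly: what the paper labels as this ``lemma'' is not a theorem at all but the verbatim definition of a regular value imported from the cited reference (Chow--Mallet-Paret--Yorke), and the paper accordingly gives no proof of it --- it is stated as background in the Preliminaries, exactly like Lemmas 2.2--2.4, and only its consequences are used later (in Theorem 3.1, via the parametrized Sard theorem and the inverse image theorem). Your proposal therefore cannot be compared against a proof in the paper; what it does instead is reconstruct the substantive facts the definition feeds into, and it does so along the standard route that the cited sources themselves use: the implicit function theorem argument for the regular value/preimage theorem (which is the content of the paper's Lemma 2.3), and the reduction of parametrized Sard to classical Sard by applying Sard to the projection $\pi(x,a)=a$ restricted to the manifold $F^{-1}(y)$ (which is the content of the paper's Lemma 2.2, and is precisely the Chow--Mallet-Paret--Yorke argument, with the hypothesis $r>\max\{0,n-p\}$ consumed exactly where you say it is). The only caveat is that your treatment of classical Sard itself is an outline rather than a proof --- the cube-covering/Taylor-remainder estimates are named but not carried out --- which is acceptable for a quoted background result but should be flagged as such; also note the statement's description of $Df(x)$ as an ``$n\times p$ matrix'' is a transcription slip (it is the $p\times n$ Jacobian), which your reading silently and correctly repairs.
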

\begin{lemma}\label{par}
	(Parameterized Sard Theorem \cite{Wang}) \ Let $V \subset \mathbb{R}^n, U \subset \mathbb{R}^m$ be open sets, and let $\phi:V\times U \to \mathbb{R}^k$ be a $C^\alpha$ mapping, where $\alpha >\text{max}\{0,m-k\}.$ If $0\in \mathbb{R}^k$ is a regular value of $\phi,$ then for almost all $a \in V, 0$ is a regular value of $\phi _ a=\phi(a,.).$    
\end{lemma}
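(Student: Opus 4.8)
The plan is to reduce Lemma~\ref{par} to the ordinary Sard theorem applied to a projection, together with one short piece of linear algebra; this is the classical ``preimage $+$ projection'' proof of the transversality / parameterized Sard theorem. First I would set $W=\phi^{-1}(0)\subseteq V\times U$. Since $0$ is a regular value of the $C^\alpha$ map $\phi$ and $\alpha\ge 1$, the preimage theorem shows that $W$ is a $C^\alpha$ submanifold of $\mathbb{R}^{n}\times\mathbb{R}^{m}$ with $\dim W=n+m-k$, and that $T_{(a,x)}W=\ker D\phi(a,x)$ at every point of $W$; if $W=\emptyset$ there is nothing to prove, since then $\phi_a^{-1}(0)=\emptyset$ for every $a$. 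Next I would consider the $C^\alpha$ projection $\pi\colon W\to V$, $\pi(a,x)=a$. Because $\dim W-\dim V=m-k$ and $\alpha>\max\{0,m-k\}$, the differentiability hypothesis needed to apply Sard's theorem to $\pi$ is met, so the set of critical values of $\pi$ is a Lebesgue-null subset of $V$; equivalently, almost every $a\in V$ is a regular value of $\pi$.

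The heart of the argument is the implication: if $a\in V$ is a regular value of $\pi$, then $0$ is a regular value of $\phi_a=\phi(a,\cdot)$. Fix such an $a$ and any $x$ with $\phi_a(x)=0$, so $(a,x)\in W$. Splitting the total derivative as $D\phi(a,x)=\big(D_a\phi(a,x),\,D_x\phi(a,x)\big)$, regularity of $0$ for $\phi$ says this map is onto $\mathbb{R}^k$, while regularity of $a$ for $\pi$ says exactly that the restriction to $\ker D\phi(a,x)$ of the first-coordinate projection is onto $\mathbb{R}^n$: for every $u\in\mathbb{R}^n$ there is $v\in\mathbb{R}^m$ with $D_a\phi(a,x)u+D_x\phi(a,x)v=0$. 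Now take any $w\in\mathbb{R}^k$: choose $(u,v_1)$ with $D_a\phi(a,x)u+D_x\phi(a,x)v_1=w$ and then $v_2$ with $D_a\phi(a,x)u+D_x\phi(a,x)v_2=0$; subtracting gives $D_x\phi(a,x)(v_1-v_2)=w$. Hence $D\phi_a(x)=D_x\phi(a,x)$ is surjective, so $0$ is a regular value of $\phi_a$. Taking contrapositives, the set $\{a\in V:\ 0\text{ is not a regular value of }\phi_a\}$ is contained in the critical-value set of $\pi$, which is null; this proves the lemma.

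The only point requiring a little care is the degenerate case $m<k$. Then $\dim W<n$, the derivative $D\phi_a(x)$ can never be onto $\mathbb{R}^k$, and ``$0$ is a regular value of $\phi_a$'' must be read as ``$\phi_a^{-1}(0)=\emptyset$'', i.e. $a\notin\pi(W)$. This is still delivered by the same projection argument: since $\dim W<n=\dim V$, every point of $V$ is a critical value of $\pi$, so Sard's theorem forces $\pi(W)$ to have measure zero, which is exactly what is needed. The hypothesis $\alpha>\max\{0,m-k\}$ is precisely what keeps Sard's theorem applicable to $\pi\colon W\to V$ both in the generic case $m\ge k$ and in this degenerate case, so the only real work is the linear-algebra step above and the bookkeeping over these two cases.
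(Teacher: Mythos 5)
Your proof is correct, and it is the classical ``preimage plus projection'' argument for the parameterized Sard theorem. Note, however, that the paper does not prove Lemma \ref{par} at all: it is quoted as a known result with a citation to the literature, so there is no in-paper proof to compare against. Your route --- taking $W=\phi^{-1}(0)$, which is a $C^\alpha$ submanifold of dimension $n+m-k$ by the regular value theorem, applying Sard's theorem to the projection $\pi\colon W\to V$ (the hypothesis $\alpha>\max\{0,m-k\}$ is exactly the smoothness Sard requires for a map dropping $m-k$ dimensions), and then the linear-algebra step showing that regularity of $a$ for $\pi$ together with surjectivity of $D\phi(a,x)$ forces $D_x\phi(a,x)$ to be onto $\mathbb{R}^k$ --- is precisely the standard argument behind the cited statement, and the subtraction trick you use is carried out correctly. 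One small wording slip: in the degenerate case $m<k$ you write that ``every point of $V$ is a critical value of $\pi$''; what you mean, and what the argument needs, is that every point of $W$ is a critical point of $\pi$, so that $\pi(W)$ is contained in the critical-value set and is therefore Lebesgue-null (as literally written, the sentence would contradict Sard's theorem whenever $W\neq\emptyset$, since $V$ is open and has positive measure). With that phrase corrected, your proof is complete and supplies a justification the paper itself leaves to the reference.
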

\begin{lemma}\label{inv}
	(The inverse image theorem \cite{Wang}) \ Let $\phi : U \subset \mathbb{R}^n \to \mathbb{R}^p$ be $C^\alpha$ mapping, where $\alpha >\text{max}\{0,n-p\}.$ Then $\phi^{-1}(0)$ consists of some $(n-p)$-dimensional $C^\alpha$ manifolds. 
\end{lemma}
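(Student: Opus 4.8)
The plan is to reduce the statement to the implicit function theorem, applied locally about each point of the zero set. Here, as is needed for the conclusion to hold and as is the case in the way the lemma is used just after Lemma~\ref{par}, one understands $0$ to be a regular value of $\phi$, i.e.\ $D\phi(x)$ has rank $p$ for every $x \in \phi^{-1}(0)$.

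First I would fix $\bar{x} \in \phi^{-1}(0)$. Since $D\phi(\bar{x})$ has full row rank $p$, some $p \times p$ submatrix of the $n \times p$ Jacobian is nonsingular; after permuting the coordinates of $\mathbb{R}^n$ we may write $x = (u,v)$ with $u \in \mathbb{R}^{n-p}$ and $v \in \mathbb{R}^{p}$ so that the block $\partial \phi / \partial v$ is invertible at $\bar{x} = (\bar{u},\bar{v})$. Then I would invoke the implicit function theorem: there exist open neighbourhoods $W \ni \bar{u}$ in $\mathbb{R}^{n-p}$ and $W' \ni \bar{v}$ in $\mathbb{R}^{p}$, together with a map $\psi : W \to W'$ of the same differentiability class $C^\alpha$ as $\phi$, with $\psi(\bar{u}) = \bar{v}$ and such that, for $(u,v) \in W \times W'$, one has $\phi(u,v) = 0$ if and only if $v = \psi(u)$. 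Hence $\phi^{-1}(0) \cap (W \times W')$ is precisely the graph $\{(u,\psi(u)) : u \in W\}$, and $(u,\psi(u)) \mapsto u$ is a $C^\alpha$ homeomorphism of this set onto the open set $W \subset \mathbb{R}^{n-p}$; this is a $C^\alpha$ chart of dimension $n - p$.

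Next I would check that the charts produced in this way --- one around each $\bar{x}$, possibly after different coordinate permutations --- are mutually compatible: on an overlap the transition map is a composition of coordinate projections with the $C^\alpha$ maps $\psi$, hence is itself $C^\alpha$. Consequently these charts form a $C^\alpha$ atlas, and $\phi^{-1}(0)$ is an $(n-p)$-dimensional $C^\alpha$ manifold, with no assumption of connectedness --- it may be empty or have several components, which is exactly what the phrase ``consists of some $(n-p)$-dimensional manifolds'' records. The extreme cases follow from the same argument: if $n = p$ every point of $\phi^{-1}(0)$ is isolated, so it is a $0$-dimensional manifold, while if $n < p$ no Jacobian $D\phi(x)$ can have rank $p$, so the regular value hypothesis forces $\phi^{-1}(0) = \emptyset$.

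I expect the only genuinely delicate point to be the bookkeeping of the differentiability index: one must make sure that the hypothesis $\alpha > \max\{0, n-p\}$ is precisely what the implicit function theorem and the smoothness of the transition maps consume, and that the implicit function $\psi$ indeed inherits the class $C^\alpha$. Granting that --- which is the standard form of the implicit function theorem --- what remains is the routine patching argument and presents no real obstacle.
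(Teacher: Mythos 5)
Your argument is correct, but note that the paper itself supplies no proof of this statement: Lemma \ref{inv} is quoted as a known result from \cite{Wang} (it is the standard regular value/preimage theorem), so there is no internal proof to compare against. What you wrote is precisely the textbook derivation — localize at a point of $\phi^{-1}(0)$, use full row rank of the Jacobian to select $p$ coordinates on which the derivative block is invertible, apply the $C^\alpha$ implicit function theorem to write the zero set locally as a graph over $\mathbb{R}^{n-p}$, and check that the resulting graph charts are $C^\alpha$-compatible. Two remarks. First, you are right that the statement as printed is incomplete: without the hypothesis that $0$ is a regular value of $\phi$ the conclusion is false (e.g.\ $\phi(x)=x_1x_2$ has a non-manifold zero set), and the hypothesis is indeed implicitly supplied in the paper by Lemma \ref{par}, which is invoked immediately before Lemma \ref{inv} exactly to guarantee that $0$ is a regular value of $H_{v^{(0)}}$ for almost all $v^{(0)}$. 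Second, the condition $\alpha>\max\{0,n-p\}$ is not actually consumed by your argument — the implicit function theorem only needs $\alpha\geq 1$, which this condition implies; its real role in the cited framework is in the Sard-type Lemma \ref{par}. So your proof is sound and fills in what the paper merely cites; the only delicate point you flagged (propagation of the class $C^\alpha$ to $\psi$ and to the transition maps) is handled by the standard form of the implicit function theorem, as you say.
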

\begin{lemma}\label{cl}
	(Classification theorem of one-dimensional smooth manifold \cite{N}) \ One-dimensional smooth manifold is diffeomorphic to a unit circle or a unit interval.
\end{lemma}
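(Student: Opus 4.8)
The plan is to reduce to the connected case, since a general $1$-manifold is the disjoint union of its connected components, each of which is itself an open-and-closed $1$-dimensional manifold, and then to produce a single global ``arc-length'' parametrization of a connected $1$-manifold $M$. First I would equip $M$ with a Riemannian metric $g$; this exists because $M$ is second countable and hence admits smooth partitions of unity. Call a smooth local diffeomorphism $f\colon I\to M$ from an interval $I\subset\mathbb{R}$ an \emph{arc-length parametrization} if $\|f'(t)\|_g\equiv 1$. Near any interior point of $M$ one obtains such an $f$ by pulling $g$ back through a chart and integrating the (up to sign) unique unit vector field; near a boundary point one does the same on a half-open interval.

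The technical core is a gluing lemma: if $f\colon(a,b)\to M$ and $h\colon(c,d)\to M$ are arc-length parametrizations whose images overlap, then on each connected component of the overlap the transition map $h^{-1}\circ f$ has the form $t\mapsto\pm t+\text{const}$, and a case analysis of how many components the overlap can have — using that removing an interior point from an interval leaves exactly two pieces, which forces the signs to match up consistently — shows that either $f$ and $h$ patch into one arc-length parametrization on a strictly longer interval, or their images already exhaust $M$ and $M$ is diffeomorphic to a circle. I would then apply Zorn's lemma to the set of arc-length parametrizations, ordered by restriction, to obtain a maximal one $f\colon I\to M$. Its image $f(I)$ is open; if it had a boundary point in $M$, the local parametrization near that point would, by the gluing lemma, extend $f$, contradicting maximality, so $f(I)$ is also closed and hence equals $M$ by connectedness. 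If $f$ is injective it is a diffeomorphism of $I$ onto $M$, so $M$ is diffeomorphic to a (possibly open or half-open) interval — after including or excluding endpoints via charts around boundary points of $M$, this is a sub-interval of $\mathbb{R}$, in particular a unit interval up to diffeomorphism; if $f$ is not injective, the gluing analysis forces $M$ to be a circle.

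The step I expect to be the main obstacle is the gluing lemma itself: ruling out pathological overlap configurations and correctly handling the manifold-with-boundary endpoints, which is precisely where one must invoke connectedness together with the $1$-dimensionality. An alternative route, which I would keep in reserve, is Morse-theoretic: choose a proper Morse function $\varphi\colon M\to\mathbb{R}$, observe that its critical points are isolated nondegenerate local maxima or minima, and track the diffeomorphism type of the sublevel sets $\varphi^{-1}((-\infty,c])$ as $c$ sweeps past critical values — each critical value either opens a new interval component or joins two endpoints of existing ones — and the resulting bookkeeping yields the same dichotomy between a circle and an interval.
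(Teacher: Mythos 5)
The paper does not prove this lemma at all: it is quoted verbatim as a classical result with a citation, and is used later only as a black box (in Theorem \ref{3.7}, to conclude that the curve $\Gamma_v^{(0)}$ is diffeomorphic to a circle or an interval). So there is no ``paper proof'' to match; what you have written is a reconstruction of the standard textbook argument (Milnor's appendix, or Guillemin--Pollack): reduce to a connected component, fix a Riemannian metric, work with unit-speed local parametrizations, prove the gluing lemma asserting that two overlapping arc-length parametrizations either merge into a longer one or force the component to be a circle, and extract a maximal parametrization (by Zorn's lemma or an exhaustion argument) whose image is open and closed, hence everything. Your outline is faithful to that proof, including the correct identification of the gluing lemma as the technical core and the correct dichotomy at the end; the Morse-theoretic alternative you keep in reserve is also a legitimate known route. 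Two small points of precision worth stating explicitly if you write this up: the classification as stated holds for \emph{connected} one-dimensional manifolds (otherwise one only gets a disjoint union of circles and intervals, which is in fact all the paper needs for $\Gamma_v^{(0)}$), and ``unit interval'' must be read up to the four diffeomorphism types $(0,1)$, $[0,1)$, $[0,1]$ and $S^1$, since an open interval is not diffeomorphic to a closed one; your parenthetical about including or excluding endpoints covers this, but the loose phrasing ``in particular a unit interval'' should not be read as the closed unit interval only.
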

\section{Main Results}
In 2019 Han  \cite{han2019continuation} proposed the following homotopy function to solve the TCP$(\mathcal{A},q)$ where $\mathcal{A}$ is a \textit{ strong strictly semipositive tensor}.
We begin by the followings:
\begin{center}
$\mathcal{F}=\{(x,w)\in \mathbb{R}^{2n}:x>0,w>0\}$ \\ 
$\mathcal{\bar{F}}=\{(x,w)\in \mathbb{R}^{2n}:x\geq 0,w\geq 0\}$\\
$\mathcal{F}_1=\mathcal{F} \times R_{++}^n \times R_{++}^n$ \\
$\mathcal{\bar{F}}_1=\mathcal{\bar{F}} \times R_{+}^n \times R_{+}^n.$\\ $\partial{\mathcal{F}_1}$ denotes the boundary of $\bar{\mathcal{F}_1}.$ \\
\end{center}

 Now we propose the following homotopy function 
 \begin{equation}\label{homf}
 H(v,v^{(0)},\mu)=\left[\begin{array}{c} 
 (1-\mu)(w-z_1+(m-1)(\hat{\mathcal{A}}x^{m-2})^T(x-z_2))+\mu(x-x^{(0)}) \\
 Z_1x-\mu Z_1^{(0)}x^{(0)}\\
 Z_2w-\mu Z_2^{(0)}w^{(0)}+(1-\mu)Xw\\
 w-(1-\mu)(\mathcal{A}x^{m-1} + q)-\mu w^{(0)}\\
  \end{array}\right]=0
 \end{equation}
 where $\hat{\mathcal{A}}$ is partially symmetrize tensor, \ $e=[1,1,\cdots,1]^T; \  X=\text{diag}(x);\ Z_1=\text{diag}(z_1); \ Z_2=\text{diag}(z_2); \ Z_1^{(0)}=\text{diag}(z_1^{(0)}); \ Z_2^{(0)}=\text{diag}(z_2^{(0)}); \ v=(x,w,z_1,z_2)\in \mathcal{\bar{F}}_1; \ v^{(0)}=(x^{(0)},w^{(0)},{z_1}^{(0)},{z_2}^{(0)})\in \mathcal{F}_1; \ \mu \in (0,1]$.  
\vsp
We prove that the proposed homotopy function contains a smooth and bounded path.

\begin{theorem}\label{reg}
	For almost all initial points $v^{(0)}\in \mathcal{F}_1,$ $0$ is a regular value of the homotopy function $H:\mathbb{R}^{4n} \times (0,1] \to \mathbb{R}^{4n}$ and the zero point set $H_{v^{(0)}} ^{-1}(0)=\{(v,\mu)\in \mathcal{F}_1 \times (0,1]:H(v,v^{(0)},\mu)=0\}$ contains a smooth curve $\Gamma_v^{(0)}$ starting from $(v^{(0)},1).$  
\end{theorem}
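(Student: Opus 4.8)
The plan is to check the hypotheses of the Parameterized Sard Theorem (Lemma~\ref{par}) for $H$ regarded as a function of \emph{all} of its arguments, then to fix a generic $v^{(0)}$ and invoke the inverse image theorem (Lemma~\ref{inv}) together with the classification of one-dimensional smooth manifolds (Lemma~\ref{cl}).

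First I would verify that $0$ is a regular value of the full map $(v,v^{(0)},\mu)\mapsto H(v,v^{(0)},\mu)$ on $\mathcal{F}_1\times\mathcal{F}_1\times(0,1)$, i.e.\ that the Jacobian $DH$ has rank $4n$ at every point of its zero set. For this it suffices to exhibit one nonsingular $4n\times 4n$ block of $DH$, and the natural choice is the block of partials with respect to the parameter $v^{(0)}=(x^{(0)},w^{(0)},z_1^{(0)},z_2^{(0)})$. Differentiating the four block rows of (\ref{homf}) gives
\[
\frac{\partial H}{\partial v^{(0)}}=-\mu\begin{bmatrix}
I & 0 & 0 & 0\\
Z_1^{(0)} & 0 & X^{(0)} & 0\\
0 & Z_2^{(0)} & 0 & W^{(0)}\\
0 & I & 0 & 0
\end{bmatrix},
\]
where $X^{(0)}=\text{diag}(x^{(0)})$, $W^{(0)}=\text{diag}(w^{(0)})$, and the column blocks are grouped according to $x^{(0)},w^{(0)},z_1^{(0)},z_2^{(0)}$. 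Since $\mu\in(0,1)$ and $v^{(0)}\in\mathcal{F}_1$ forces $x^{(0)},w^{(0)}>0$, the diagonal matrices $X^{(0)},W^{(0)}$ are invertible; reading the block rows in the order $1,4,2,3$ exhibits a (generalized) triangular form, so the matrix is nonsingular. Hence $DH$ has full row rank $4n$ on $H^{-1}(0)$ and $0$ is a regular value of $H$.

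Because $H$ is polynomial in $x$ and $C^\infty$ in all variables, the smoothness hypothesis of Lemma~\ref{par} is satisfied (here $m=4n+1$, $k=4n$, so one only needs $\alpha>\max\{0,m-k\}=1$); applying that lemma with $V$ the space of parameters $v^{(0)}$ and $U$ the space of variables $(v,\mu)$ shows that for almost all $v^{(0)}\in\mathcal{F}_1$, $0$ is a regular value of $H_{v^{(0)}}(\cdot,\cdot)=H(\cdot,v^{(0)},\cdot)$. Fix such a $v^{(0)}$. By the inverse image theorem (Lemma~\ref{inv}), $H_{v^{(0)}}^{-1}(0)\cap(\mathcal{F}_1\times(0,1))$ is a disjoint union of one-dimensional $C^\infty$ manifolds. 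It then remains to locate the starting point: substituting $\mu=1$, $v=v^{(0)}$ into (\ref{homf}) gives $H(v^{(0)},v^{(0)},1)=0$, and solving $H_{v^{(0)}}(v,1)=0$ row by row (rows $1$ and $4$ give $x=x^{(0)}$, $w=w^{(0)}$; rows $2$ and $3$ then give $z_1=z_1^{(0)}$, $z_2=z_2^{(0)}$, using $x^{(0)},w^{(0)}>0$) shows $(v^{(0)},1)$ is the unique zero on the slice $\{\mu=1\}$. Evaluating $\partial H_{v^{(0)}}/\partial v$ at $(v^{(0)},1)$ yields the same block matrix as above (with $-\mu$ replaced by $1$ and the $(1-\mu)$-terms absent), which is nonsingular, so the implicit function theorem represents $H_{v^{(0)}}^{-1}(0)$ near $(v^{(0)},1)$ as a graph $\mu\mapsto v(\mu)$ over a half-neighborhood $(1-\delta,1]$; thus $(v^{(0)},1)$ is a boundary point of a one-dimensional manifold, and by Lemma~\ref{cl} the connected component $\Gamma_{v^{(0)}}$ through it is diffeomorphic to a (half-open or closed) interval, i.e.\ a smooth curve starting from $(v^{(0)},1)$.

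I expect the rank computation of $\partial H/\partial v^{(0)}$ to be the crux: one must spot the block-triangular pattern that makes it invertible even though the homotopy genuinely couples $x,w,z_1,z_2$. The remainder is a routine application of the three quoted lemmas, the only mild subtlety being that $(0,1]$ is a manifold with boundary, so Sard's theorem and the inverse image theorem are applied on the open part $(0,1)$ while the endpoint $\mu=1$ is handled separately by the implicit function theorem.
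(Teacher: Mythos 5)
Your proposal is correct and follows essentially the same route as the paper: both establish full row rank of $DH$ by showing the block $\partial H/\partial v^{(0)}$ is nonsingular (block-triangular after reordering, with determinant $\pm\mu^{4n}\prod_i x_i^{(0)}w_i^{(0)}\neq 0$), then invoke the Parameterized Sard Theorem and the inverse image theorem together with $H(v^{(0)},v^{(0)},1)=0$ to extract the smooth curve $\Gamma_{v^{(0)}}$. Your additional care at the endpoint $\mu=1$ (uniqueness of the zero on that slice and the implicit function theorem argument) is a minor refinement of the same argument, not a different approach.
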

\begin{proof}
	The jacobian matrix of the above homotopy function $H(v,v^{(0)},\mu)$ is denoted by $DH(v,v^{(0)},\mu))$ and we have $DH(v,v^{(0)},\mu))=$ $\left[\begin{array}{ccc} 
	\frac{\partial{H(v,v^{(0)},\mu)}}{\partial{v}} & 	\frac{\partial{H(v,v^{(0)},\mu)}}{\partial{v^{(0)}}} & \frac{\partial{H(v,v^{(0)},\mu)}}{\partial{\mu}}\\ 
	\end{array}\right].$ For all $v^{(0)} \in \mathcal{F}_1$ and $\mu \in (0,1],$ we have\\ $\frac{\partial{H(v,v^{(0)},\mu)}}{\partial{v^{(0)}}}=$ $\begin{bmatrix} -\mu I & 0 & 0 & 0\\
    -\mu Z_1^{(0)} & 0 & -\mu X^{(0)} & 0\\
     0 & -\mu Z_2^{(0)} & 0 & -\mu W^{(0)}\\
      0 & -\mu I & 0 & 0\\
      \end{bmatrix}=L_1$,\\ 
       where $W^{(0)}=\text{diag}(w^{(0)}), X^{(0)}=\text{diag}(x^{(0)})$, $Z_1^{(0)}=\text{diag}(z_1^{(0)})$, $Z_2^{(0)}=\text{diag}(z_2^{(0)})$.\\ Note that after some elementary row operations the block matrix $L_1$ becomes a lower triangular matrix with nonzero diagonals. Now
	$\text{det}(\frac{\partial{H}}{ \partial{v^{(0)}}})=\mu^{4n}\prod_{i=1}^{n_1} x_i^{(0)}w_i^{(0)}\neq 0$ for $\mu \in (0,1].$  
	Thus $DH(v,v^{(0)},\mu))$ is of full row rank. Therefore, $0$ is a regular value of $H(v,v^{(0)},\mu)).$ By Lemmas \ref{par} and \ref{inv}, for almost all  $v^{(0)} \in \mathcal{F}_1,$ $0$ is a regular value of $H(v,v^{(0)},\mu)$ and $H_{v^{(0)}} ^{-1}(0)$ consists of some smooth curves and $H(v^{(0)},v^{(0)},1)=0.$ Hence there must be a smooth curve  $\Gamma_v^{(0)}$ starting from  $(v^{(0)},1).$
\end{proof}

\begin{theorem}\label{bdd}
	Let $\mathcal{F}_1$ be a nonempty set. Assume that there exists a sequence of points $\{m^k\} \subset \Gamma_v^{(0)} \subset \mathcal{F}_1 \times (0,1],$ where $m^k=(x^k,w^k,z_1^k,z_2^k, \mu^k)$ such that $\|x^k\|< \infty \ \text{as} \ k \to \infty$ and $\|z_2 ^k\|< \infty \ \text{as} \ k \to \infty$. For a given $v^{(0)} \in \mathcal{F}_1,$ if  $0$ is a regular value of $H(v,v^{(0)},\mu),$ then $\Gamma_v^{(0)}$ is a bounded curve in $\mathcal{\bar{F}}_1 \times (0,1].$ 
\end{theorem}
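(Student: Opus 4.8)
The plan is to bound each of the five groups of variables $x,w,z_1,z_2,\mu$ along $\Gamma_v^{(0)}$ separately, reading the needed information off the four blocks of the homotopy equation \eqref{homf}. Since $\Gamma_v^{(0)}\subset\mathcal F_1\times(0,1]$, the homotopy parameter $\mu$ already lies in $(0,1]$, and the standing hypothesis supplies uniform bounds on $x$ and on $z_2$ along the curve. The first and easiest gain is a bound on $w$: the fourth block of \eqref{homf} reads $w=(1-\mu)(\mathcal Ax^{m-1}+q)+\mu w^{(0)}$, and since $\mathcal Ax^{m-1}$ is a fixed polynomial map evaluated at the bounded vector $x$, with $q$, $w^{(0)}$ fixed and $\mu\in(0,1]$, this forces $\|w\|$ to be bounded on $\Gamma_v^{(0)}$.

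The substantive step is bounding $z_1$. I would first rewrite the first block of \eqref{homf} as
\[
(1-\mu)\,z_1=(1-\mu)\bigl(w+(m-1)(\hat{\mathcal A}x^{m-2})^{T}(x-z_2)\bigr)+\mu\,(x-x^{(0)}),
\]
whose right-hand side is bounded, using the bounds already obtained on $x,z_2,w$ together with $\mu\le1$; hence there is a constant $C$ with $0\le(1-\mu)\,z_{1,i}\le C$ for every coordinate $i$ along $\Gamma_v^{(0)}$ (nonnegativity because $\mu\le1$ and $z_1>0$ on $\mathcal F_1$). Suppose now, for contradiction, that $z_1$ is unbounded on $\Gamma_v^{(0)}$, and pick $m^k=(x^k,w^k,z_1^k,z_2^k,\mu^k)\in\Gamma_v^{(0)}$ with $\|z_1^k\|\to\infty$. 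From $0\le(1-\mu^k)z_{1,i}^k\le C$ the blow-up of some coordinate forces $\mu^k\to1$, and after passing to a subsequence there is a fixed index $j$ with $z_{1,j}^k\to+\infty$, while $x^k,w^k,z_2^k$ converge (being bounded).

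For this index $j$ the second block $Z_1x-\mu Z_1^{(0)}x^{(0)}=0$ gives $z_{1,j}^k\,x_j^k=\mu^k z_{1,j}^{(0)}x_j^{(0)}\to z_{1,j}^{(0)}x_j^{(0)}>0$, so $x_j^k\to0$. Feeding $\mu^k\to1$, $x_j^k\to0$, and the boundedness of $w_j^k$ and of $g_j^k:=(m-1)\bigl((\hat{\mathcal A}(x^k)^{m-2})^{T}(x^k-z_2^k)\bigr)_j$ into the $j$-th scalar equation $(1-\mu^k)(w_j^k-z_{1,j}^k+g_j^k)+\mu^k(x_j^k-x_j^{(0)})=0$ of the first block, every term tends to $0$ except $-\mu^k x_j^{(0)}\to-x_j^{(0)}$ and $-(1-\mu^k)z_{1,j}^k$; along a further subsequence on which $(1-\mu^k)z_{1,j}^k\to L\in[0,C]$ the equation passes to the limit $-L-x_j^{(0)}=0$, i.e.\ $L=-x_j^{(0)}<0$, contradicting $L\ge0$. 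Hence $z_1$ is bounded, and combined with the bounds on $x,w,z_2,\mu$ this shows $\Gamma_v^{(0)}$ is contained in a bounded subset of $\bar{\mathcal F}_1\times(0,1]$, i.e.\ it is a bounded curve.

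I expect the only real difficulty to be exactly the regime $\mu^k\to1$ handled above: away from $\mu=1$ one simply divides the displayed identity by $1-\mu$ and the bound on $z_1$ is immediate, so the delicate point is to rule out the curve escaping to infinity in the $z_1$-direction while $\mu$ returns to its initial value $1$. The mechanism that closes the gap is the complementarity-type second block, which ties a large $z_{1,j}$ to a vanishing $x_j$ and thereby makes the first block inconsistent in the limit. As a sanity check one may also note that $\mu=1$ on $\Gamma_v^{(0)}$ can occur only at the starting point $(v^{(0)},1)$ (substitute $\mu=1$ into \eqref{homf} and use $x^{(0)},w^{(0)}>0$), which is consistent with---though not needed for---the argument.
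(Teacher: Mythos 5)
Your proof is correct and follows essentially the same route as the paper: bound $w$ via the fourth block, then argue by contradiction on a blow-up of $z_1$, handling the regime where $\mu$ stays away from $1$ by dividing the first block by $1-\mu$, and the regime $\mu\to 1$ by using the second (complementarity) block to force $x_j^k\to 0$ and the first block to contradict $x_j^{(0)}>0$. Your version is slightly tidier in that the uniform bound on $(1-\mu)z_1$ is used to show a $z_1$ blow-up forces $\mu^k\to 1$, unifying the paper's two cases on $\bar\mu$, but the underlying mechanism is identical.
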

\begin{proof}
		We have that $0$ is a regular value of $H(v,v^{(0)},\mu)$  by theorem \ref{reg} and $\mathcal{F}_1$ be a nonempty set. It is clear that the set $(0,1]$ is bounded. So there exists a sequence of points $\{x^k,w^k,z_1 ^k,z_2 ^k,\mu^k\} \subset \Gamma_v^{(0)} \times (0,1],$ such that $\lim\limits_{k \to \infty}x^k=\bar{x}, \lim\limits_{k \to \infty}w^k=\bar{w},  \lim\limits_{k \to \infty}z_2 ^k=\bar{z_2},  \lim\limits_{k \to \infty}\mu^k=\bar{\mu}.$ Note that the boundedness of the sequence $\{x^k\}$ gurantees the boundedness of the sequence $\{w^k\}$.
	 By contradiction we assume that $\Gamma_v^{(0)} \subset \mathcal{F}_1 \times (0,1]$ is an unbounded curve. Then there exists a sequence of points $\{m^k\},$ where $m^k=(v^k, \mu^k) \subset \Gamma_v^{(0)}$ such that $\|(v^k, \mu^k)\| \to \infty.$ Assume that  $ \|z_1 ^k\| \to \infty \ \text{as} \ k \to \infty.$  Since $\Gamma_v^{(0)} \subset H^{-1}(0),$ we have
\begin{equation}\label{zzq}
(1-\mu^k)[w^k-z_1 ^k+(m-1)(\hat{\mathcal{A}}(x^k)^{m-2})^T(x^k-z_2 ^k)]+\mu^k(x^k-x^{(0)}) =0 
\end{equation}	
\begin{equation}\label{yyq}
 Z_1 ^k x^k-\mu^k Z_1^{(0)}x^{(0)}=0
\end{equation}
\begin{equation}\label{wwq}
 Z_2 ^k w^k-\mu^k Z_2^{(0)}w^{(0)}+(1-\mu^k)X^k w^k=0
\end{equation}
\begin{equation}\label{xxq}
    w^k-(1-\mu^k)(\mathcal{A}(x^k)^{m-1} + q)-\mu^k w^{(0)}=0 
\end{equation}
where $Z_1 ^k=\text{diag}(z_1 ^k), X^k=\text{diag}(x^k)$, $W^k=\text{diag}(w^k)$ and $Z_2 ^k=\text{diag}(z_2 ^k).$\\
\vsp

\NI Let
$\bar{\mu} \in [0,1], \|z_1 ^k\|=\infty$ and $\|z_2 ^k\|<\infty$ as $k \to \infty.$
Then $\exists \ i \in \{1,2,\cdots, n\}$ such that $z_{1i} ^k \to \infty$ as $k \to \infty.$ Let $I_{1z}=\{i\in\{1,2,\cdots n\} : \lim\limits_{k\to \infty}z_{1i} ^k = \infty\}.$ For $\bar{\mu} \in [0,1)$ and $i\in I_{1z}$, we obtain from equation \ref{zzq},\\ $(1-\mu^k)[w^k-z_1 ^k+(m-1)(\hat{\mathcal{A}}(x^k)^{m-2})^T(x^k-z_2 ^k)]+\mu^k(x^k-x^{(0)}) =0$\\
$\implies (1-\mu^k)z_{1i} ^k=(1-\mu^k)[w_i ^k+(m-1)(\hat{\mathcal{A}}(x^k)^{m-2})_i^T(x_i ^k-z_{2i} ^k)]+\mu^k(x_i ^k-x_i ^{(0)})\\
\implies z_{1i} ^k=[w_i ^k+(m-1)(\hat{\mathcal{A}}(x^k)^{m-2})_i^T(x_i ^k-z_{2i} ^k)]+\frac{\mu^k}{(1-\mu^k)}(x_i ^k-x_i ^{(0)}).$\\ As $k \to \infty$ right hand side is bounded, but left hand side is unbounded. It contradicts that $\|z_1 ^k\|=\infty.$\\ When $\bar{\mu}=1,$  from equation \ref{yyq}, we obtain, $x_i ^k=\frac{\mu^k z_{1i} ^{(0)} x_i ^{(0)}}{z_{1i} ^k}$ for $i \in I_{1z}.$ As $k \to \infty, x_i ^k \to 0.$ \\ Again from equation \ref{zzq}, we obtain	$x_i ^{(0)}=\frac{(1-\mu^k)}{\mu^k}[w_i ^k-z_{1i} ^k+(m-1)(\hat{\mathcal{A}}(x^k)^{m-2})_i ^T(x_i ^k-z_{2i} ^k)]+x_i ^k$ for $i \in I_{1z}.$ As $k \to \infty,$ we have  $x_i ^{(0)}=-\lim\limits_{k\to \infty}\frac{(1-\mu^k)}{\mu^k}z_{1i} ^k \leq 0.$ It contradicts that $\|z_1 ^k\|=\infty.$  So  $\Gamma_v^{(0)}$ is a bounded curve in $\mathcal{F}_1 \times (0,1].$  
\end{proof}
Therefore the boundedness of the sequences $\{x_k\}$ and $\{z_2 ^k\}$ gurantee the boundedness of the sequence $\{z_1 ^k\},$ i.e. the boundedness of the sequence $\{m^k\}.$  Now we prove the boundedness of the sequences  $\{x_k\}$ and $\{z_2 ^k\}$.

\begin{theorem}\label{001}
 For a given $v^{(0)} \in \mathcal{F}_1,$ if  $0$ is a regular value of $H(v,v^{(0)},\mu),$ then $\Gamma_v^{(0)}$ is a bounded curve in $\mathcal{\bar{F}}_1 \times (0,1].$ 

\end{theorem}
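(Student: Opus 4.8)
The plan is to obtain Theorem~\ref{001} directly from Theorem~\ref{bdd}: the latter already shows that $\Gamma_v^{(0)}$ is bounded in $\bar{\mathcal F}_1\times(0,1]$ as soon as one knows that along it the component sequences $\{x^k\}$ and $\{z_2^k\}$ stay bounded, so it is enough to prove exactly these two facts and then invoke Theorem~\ref{bdd}. Fix an arbitrary sequence $\{m^k\}\subset\Gamma_v^{(0)}\subset\mathcal F_1\times(0,1]$ with $m^k=(x^k,w^k,z_1^k,z_2^k,\mu^k)$. Since the curve lies in $H_{v^{(0)}}^{-1}(0)$, equations \ref{zzq}--\ref{xxq} hold for each $k$, and since it lies in $\mathcal F_1\times(0,1]$ every coordinate of $x^k,w^k,z_1^k,z_2^k$ is strictly positive with $0<\mu^k\le1$. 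The first step I would carry out is to read off from \ref{yyq} and \ref{wwq} the coordinatewise identities
\[
x_i^k z_{1i}^k=\mu^k x_i^{(0)} z_{1i}^{(0)},\qquad z_{2i}^k w_i^k+(1-\mu^k)x_i^k w_i^k=\mu^k z_{2i}^{(0)} w_i^{(0)} .
\]
Because $\mu^k\le1$ and $v^{(0)}\in\mathcal F_1$, the nonnegative quantities $x_i^k z_{1i}^k$, $z_{2i}^k w_i^k$ and $(1-\mu^k)x_i^k w_i^k$ are uniformly bounded in $k$ for every $i$; this is the engine of all the estimates below.

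Second, I would rule out $\|x^k\|\to\infty$. Assuming it, pass to a subsequence with $\mu^k\to\bar\mu\in[0,1]$ and $x^k/\|x^k\|_\infty\to\bar x\ge0$, $\|\bar x\|_\infty=1$, and set $S=\{i:\bar x_i>0\}\neq\emptyset$, so $x_i^k\to\infty$ for $i\in S$ and hence, by the identities above, $w_i^k\to0$ and $z_{1i}^k\to0$ on $S$. If $\bar\mu<1$, I divide \ref{xxq} by $\|x^k\|_\infty^{m-1}$ and use the homogeneity of $x\mapsto\mathcal A x^{m-1}$: letting $k\to\infty$, the left side tends to $0$ for $i\in S$ and to a nonnegative limit for $i\notin S$, while the right side tends to $(1-\bar\mu)(\mathcal A\bar x^{m-1})_i$; hence $\bar x\ge0$, $\bar x\neq0$, $\mathcal A\bar x^{m-1}\ge0$ and $\bar x^{T}\mathcal A\bar x^{m-1}=0$, contradicting the (strict) semipositivity of $\mathcal A$ underlying the construction. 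If $\bar\mu=1$, I would isolate $x_i^{(0)}$ from the $i$-th row of \ref{zzq} for $i\in S$, use $x_i^k\to\infty$ together with $w_i^k\to0$ and $z_{1i}^k\to0$ to force $\frac{1-\mu^k}{\mu^k}\big((\hat{\mathcal A}(x^k)^{m-2})^{T}(x^k-z_2^k)\big)_i\to-\infty$, and combine this with the bound $z_{2i}^k w_i^k\le z_{2i}^{(0)} w_i^{(0)}$ (which controls $z_2^k$ on the coordinates where $w^k$ does not vanish) and the positivity $x_i^{(0)}>0$ to reach a contradiction exactly as in the $\bar\mu=1$ part of the proof of Theorem~\ref{bdd}.

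Third, with $\{x^k\}$ now bounded, \ref{xxq} makes $\{w^k\}$ and $\{\hat{\mathcal A}(x^k)^{m-2}\}$ bounded too, and I would rule out $\|z_2^k\|\to\infty$. Passing to a subsequence with $x^k\to\bar x$, $w^k\to\bar w$, $\mu^k\to\bar\mu$ and $z_{2i}^k\to\infty$ for $i$ in a nonempty set $J$, the bound $z_{2i}^k w_i^k\le z_{2i}^{(0)} w_i^{(0)}$ gives $w_i^k\to0$ on $J$, so by \ref{xxq} $(1-\bar\mu)(\mathcal A\bar x^{m-1}+q)_i+\bar\mu w_i^{(0)}=0$ for $i\in J$; since $w_i^{(0)}>0$ this already excludes $\bar\mu=1$, and for $\bar\mu<1$ it fixes the sign of $(\mathcal A\bar x^{m-1}+q)_i$ on $J$. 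Feeding $z_2^k\to\infty$ on $J$ into the $i$-th row of \ref{zzq} — whose only possibly unbounded term is $-(m-1)(1-\mu^k)\big((\hat{\mathcal A}(x^k)^{m-2})^{T}z_2^k\big)_i$ — and balancing it against the remaining bounded terms, the fixed value $x_i^{(0)}$, and the identity just derived for $(\mathcal A\bar x^{m-1}+q)_i$, produces the contradiction. Having bounded $\{x^k\}$ and $\{z_2^k\}$, Theorem~\ref{bdd} yields the conclusion.

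The step I expect to be the real obstacle is the boundary regime $\bar\mu\to1$ together with the fact that $x^k$ and $z_2^k$ are coupled precisely through the term $(m-1)(\hat{\mathcal A}x^{m-2})^{T}(x-z_2)$ in the first block of \ref{homf}: one cannot estimate the two sequences in complete isolation and must track which coordinates of $w^k$ tend to zero and at what rate, the products $x_i^k w_i^k$ and $z_{2i}^k w_i^k$ being the only available controls. Turning the $\bar\mu=1$ sign computations into airtight contradictions — i.e.\ showing that the divergent tensor term cannot be absorbed without violating $x^{(0)}>0$ or $w^{(0)}>0$ — is where the strict positivity of the starting point $v^{(0)}\in\mathcal F_1$ and the semipositivity of $\mathcal A$ are both genuinely used, and where I would concentrate the effort.
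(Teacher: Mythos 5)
There is a genuine gap, on two counts. First, your argument for ruling out $\|x^k\|\to\infty$ when $\bar\mu<1$ hinges on the ``(strict) semipositivity of $\mathcal{A}$ underlying the construction'': you derive $\bar x\geq 0$, $\bar x\neq 0$, $\mathcal{A}\bar x^{m-1}\geq 0$, $\bar x^{T}\mathcal{A}\bar x^{m-1}=0$ and declare this contradictory. But the statement of Theorem~\ref{001} carries no tensor-class hypothesis, and the paper's own proof never invokes one: it normalizes by $e^{T}x^{k}$ (or $(1-\mu^k)e^{T}x^{k}$), passes to limit directions $\xi,\zeta,\eta,\sigma$ with $e^{T}\xi=1$, and combines the four scaled limit equations (as in \ref{n1}--\ref{n4}, \ref{nn1}--\ref{nn4}) to force $\zeta^{T}\xi=-c\,\xi^{T}\xi\leq 0$ for an explicit $c>0$, which is then played against $\xi,\zeta\geq 0$ and $e^{T}\xi=1$; in particular the first block \ref{zzq}, which you do not use at all in this branch, is where the paper extracts its contradiction. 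So what you have is at best a proof of a different, conditional statement (boundedness under strict semipositivity), not of the theorem as stated and proved in the paper.

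Second, the two branches you yourself identify as the crux are not actually carried out. For $\bar\mu=1$ with $x_i^{k}\to\infty$ you say the contradiction is reached ``exactly as in the $\bar\mu=1$ part of the proof of Theorem~\ref{bdd}'', but that part of Theorem~\ref{bdd} treats unbounded $z_1^{k}$, where \ref{yyq} immediately gives $x_i^{k}=\mu^{k}z_{1i}^{(0)}x_i^{(0)}/z_{1i}^{k}\to 0$ and then \ref{zzq} yields $x_i^{(0)}\leq 0$; with $x^{k}$ (or $z_2^{k}$) unbounded the roles are reversed and that manipulation does not transfer, since the divergent term is now the tensor expression $(m-1)(\hat{\mathcal{A}}(x^{k})^{m-2})^{T}(x^{k}-z_2^{k})$, whose growth in the diverging coordinates you have no control over without further normalization. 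Likewise, in the case $\|z_2^{k}\|\to\infty$ with $x^{k}$ bounded, your final step is only the assertion that balancing the unbounded term against the bounded ones ``produces the contradiction''; no such balancing is exhibited, and it is exactly here (the coupling of $x^{k}$ and $z_2^{k}$ through the first block, as you note) that the work lies. Since your closing paragraph concedes these are open obstacles rather than completed steps, the proposal is an outline with its decisive estimates missing, rather than a proof.
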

\begin{proof}
		 Suppose the solution set $\Gamma_v^{(0)}$ is unbounded for $\mu\in [0,1)$. Then there exists a sequence of points $\{m^k\} \subset \Gamma_v^{(0)} \subset \mathcal{F}_1 \times [0,1),$ where $m^k=(v^k,\mu^k)=(x^k,w^k,z_1 ^k,z_2 ^k, \mu^k)$ such that  $\lim\limits_{k\to \infty}\mu^k=\bar{\mu} \in [0,1)$.  There also exist $(\xi, \zeta, \eta, \sigma) \in R_+^{4n}$ such that $e^T \xi =1.$ Now we consider following two cases. \\
	\vsp
	\NI Case 1:  $\|z_2^k\|<\infty$ as $k \to \infty$. Since the solution set $\Gamma_v^{(0)}$ is unbounded, we consider the following two subcases.\\ \NI Subcase (i) $\lim\limits_{k\to \infty}e^Tx^k=\infty:$\\   Let $\lim\limits_{k\to \infty}\frac{x^k}{e^Tx^k}=\xi \geq 0 ,$ $\lim\limits_{k\to \infty}\frac{w^k}{e^Tx^k}=\zeta \geq 0 $ and $\lim_{k\to  \infty}\frac{z_1^k}{e^Tx^k}=\eta \geq 0. $ So it is clear that $e^T\xi=1.$ Multiplying $(x^k)^T $ in both sides of \ref{zzq} and taking limit $k \to \infty $ from equation \ref{zzq}, we write
	$(1-\mu^k)[(x^k)^Tw^k-(x^k)^Tz_1 ^k+(m-1)(x^k)^T(\hat{\mathcal{A}}(x^k)^{m-2})^T(x^k-z_2 ^k)]+\mu^k(x^k)^T(x^k-x^{(0)}) =0 .$ This implies 
	\begin{equation}\label{zq}
	    (1-\mu^k)[(x^k)^Tw^k-(x^k)^Tz_1 ^k+(m-1)(\hat{\mathcal{A}}(x^k)^{m-1})^T(x^k-z_2 ^k)]+\mu^k(x^k)^T(x^k-x^{(0)})=0.
	 	\end{equation}
	Dividing the equations \ref{zq}, \ \ref{yyq} and \ref{wwq} by $(e^Tx^k)^2$ and taking limit $k \to \infty $  and dividing by  $(e^Tx^k)$ and taking limit $k \to \infty $ from equation \ref{xxq}, we write\\
	 \begin{eqnarray}
	(1-\bar{\mu})[\xi^T\zeta-\xi^T\eta+(m-1)(\hat{\mathcal{A}}\xi^{m-1})^T \xi]+\bar{\mu}\xi^T\xi=0\label{n1}\\
	\xi_i \eta_i=0 \ \forall \ i \label{n2}\\
	 (1-\bar{\mu})\xi_i\zeta_i=0 \ \forall \ i \label{n3}\\
	  \zeta-(1-\bar{\mu})\mathcal{A}\xi^{m-1}=0 \label{n4}
		\end{eqnarray}
		 From equation \ref{n4} $\zeta=(1-\bar{\mu})\mathcal{A}\xi^{m-1}$. Now multiplying $\xi^T $ in both sides we obtain $\xi^T \zeta=(1-\bar{\mu})\xi^T \mathcal{A}\xi^{m-1}=(1-\bar{\mu})\xi^T \hat{\mathcal{A}}\xi^{m-1} \implies  (\hat{\mathcal{A}}\xi^{m-1})^T \xi=\frac{1}{(1-\bar{\mu})} \zeta^T \xi$. Hence from the equation \ref{n1}, we obtain $(1-\bar{\mu})(m-1)(\hat{\mathcal{A}}\xi^{m-1})^T\xi+\bar{\mu}\xi^T\xi=0 \implies $  $\zeta^T\xi=-\frac{\bar{\mu}}{(m-1)}\xi^T\xi \leq 0.$   for $\bar{\mu} \in [0,1].$ \\ Specifically for $\bar{\mu}\in (0,1),$ $\zeta^T\xi<0,$ contradicts that $\xi, \zeta \geq 0.$ Hence the solution set  $\Gamma_v^{(0)}$ is bounded for $\bar{\mu}\in (0,1).$  Note that for $\bar{\mu}=0, \zeta^T\xi=0 \implies \xi=0$ and for $\bar{\mu}=1,$ $\xi=0.$  This contradicts that $e^T\xi=1.$ Therefore the solution set  $\Gamma_v^{(0)}$ is bounded for $\bar{\mu}\in [0,1].$   
	\vsp
		\NI	Subcase (ii) $\lim_{k\to \infty}(1-\mu^k)e^Tx^k= \infty:$\\
	 Let $\lim_{k\to \infty}\frac{(1-\mu^k)x^k}{(1-\mu^k)e^Tx^k}=\xi'\geq 0.$ Then $e^T\xi'=1.$  Let  $\lim_{k\to \infty}\frac{w^k}{(1-\mu^k)e^Tx^k}=\zeta' \geq 0, \ \lim_{k\to \infty}\frac{z_1^k}{(1-\mu^k)e^Tx^k}=\eta' \geq 0.$ Then multiplying the equations \ref{zq}, \ref{yyq}  with $(1-\mu^k)$ and dividing by $((1-\mu^k)e^Tx^k)^2$ and  dividing the equations \ref{xxq} and \ref{wwq} by $(1-\mu^k)e^Tx^k$ and $((1-\mu^k)e^Tx^k)^2$ respectively and taking limit $k \to \infty$, we write
		\begin{eqnarray}
		(1-\bar{\mu})[(\xi')^T\zeta'-(\xi')^T\eta']+(m-1)(\hat{\mathcal{A}}{\xi'}^{m-1})^T \xi'+\frac{\bar{\mu}}{1-\bar{\mu}}(\xi')^T\xi' =0 \label{n11}\\
	\xi'_i \eta'_i=0 \ \forall \ i \label{n22}\\
	 \xi'_i\zeta'_i=0 \ \forall \ i \label{n33}\\
	  \zeta'-\mathcal{A}{\xi'}^{m-1}=0 \label{n44}
		\end{eqnarray}
Using equations \ref{n22}, \ref{n33} and \ref{n44} in the equation \ref{n11}, we obtain\\ $(\zeta')^T\xi'=-\frac{\bar{\mu}}{(m-1)(\bar{\mu})}(\xi')^T\xi'\leq 0$.\\ Specifically for $\bar{\mu}\in (0,1),$ $\zeta^T\xi<0,$ contradicts that $\xi, \zeta \geq 0.$ Hence the solution set  $\Gamma_v^{(0)}$ is bounded for $\bar{\mu}\in (0,1).$  Note that for $\bar{\mu}=0, \zeta^T\xi=0\implies \xi=0$ and for $\bar{\mu}=1,$ $\xi=0.$  This contradicts that $e^T\xi=1.$ Therefore the solution set  $\Gamma_v^{(0)}$ is bounded for $\bar{\mu}\in [0,1].$

	\vsp
	\NI Case 2:  $\lim_{k\to \infty}e^Tz_2^k=\infty$. Since the solution set of $\Gamma_v^{(0)}$ is unbounded we consider following two subcases.\\ \NI Subcase (i) $\lim_{k\to \infty}e^Tx^k=\infty$: \\	
		Let $\lim\limits_{k\to \infty}\frac{x^k}{e^Tx^k}=\xi \geq 0, $ $\lim\limits_{k\to \infty}\frac{w^k}{e^Tx^k}=\zeta \geq 0, \ \lim\limits_{k\to \infty}\frac{z_1 ^k}{e^Tx^k}=\eta \geq 0, \ \lim\limits_{k\to \infty}\frac{z_2 ^k}{e^Tx^k}=\sigma \geq 0. $ It is clear that $e^T\xi=1.$ Then dividing by $e^Tx^k$ and taking limit $k \to \infty $ from equation \ref{xxq} and dividing by $(e^Tx^k)^2$ and taking limit $k \to \infty $ from equations \ref{zq}, \ref{yyq} and \ref{wwq}, we write
\begin{eqnarray}
	(1-\bar{\mu})[(\xi)^T\zeta-(\xi)^T\eta+(m-1)(\hat{\mathcal{A}}\xi^{m-1})^T (\xi-\sigma)]+\bar{\mu}(\xi)^T\xi =0 \label{nn1}\\
	\xi_i \eta_i=0 \ \forall \ i \label{nn2}\\
	 \sigma_i\zeta_i+(1-\bar{\mu})\xi_i\zeta_i=0 \ \forall \ i \label{nn3}\\
	  \zeta-(1-\bar{\mu})\mathcal{A}\xi^{m-1}=0 \label{nn4}
		\end{eqnarray}
		Using the equations \ref{nn2}, \ref{nn3} and \ref{nn4}, from equation \ref{nn1}, we obtain\\
		$(m-1)\zeta^T(\xi-\sigma)-\zeta^T\sigma=-\bar{\mu}(\xi)^T\xi \implies (m-1)\zeta^T\xi+(m-1)(1-\bar{\mu})\zeta^T\xi+((1-\bar{\mu}))\zeta^T\xi=-\bar{\mu}(\xi)^T\xi 
		\implies \zeta^T\xi=-\frac{\bar{\mu}}{m-1+m(1-\bar{\mu})}(\xi)^T\xi \leq 0.$
		Specifically for $\bar{\mu}\in (0,1),$ $\zeta^T\xi<0,$ contradicts that $\xi, \zeta \geq 0.$ Hence the solution set  $\Gamma_v^{(0)}$ is bounded for $\bar{\mu}\in (0,1).$  Note that for $\bar{\mu}=0, \zeta^T\xi=0\implies \xi=0$ and for $\bar{\mu}=1,$ $\xi=0.$  This contradicts that $e^T\xi=1.$ Therefore the solution set  $\Gamma_v^{(0)}$ is bounded for $\bar{\mu}\in [0,1].$ \\    
	\vsp
\NI	Subcase(ii) $\lim_{k\to \infty}(1-\mu^k)e^Tx^k= \infty:$\\
	 Let $\lim_{k\to \infty}\frac{(1-\mu^k)x^k}{(1-\mu^k)e^Tx^k}=\xi'\geq 0.$ Then $e^T\xi'=1.$  Let  $\lim_{k\to \infty}\frac{w^k}{(1-\mu^k)e^Tx^k}=\zeta' \geq 0, \ \lim_{k\to \infty}\frac{z_1^k}{(1-\mu^k)e^Tx^k}=\eta' \geq 0, lim_{k\to \infty}\frac{z_2^k}{(1-\mu^k)e^Tx^k}=\sigma' \geq 0.$ Then multiplying the equations \ref{zq}, \ref{yyq}  with $(1-\mu^k)$ and dividing by $((1-\mu^k)e^Tx^k)^2$ and  dividing the equations \ref{xxq} and \ref{wwq} by $(1-\mu^k)e^Tx^k$ and $((1-\mu^k)e^Tx^k)^2$ respectively and taking limit $k \to \infty$,  we obtain
	\begin{eqnarray}
		(1-\bar{\mu})[(\xi')^T\zeta'-(\xi')^T\eta']+(m-1)(\hat{\mathcal{A}}{\xi'}^{m-1})^T (\xi'-\sigma')+\frac{\bar{\mu}}{1-\bar{\mu}}(\xi')^T\xi' =0 \label{nn11}\\
	\xi'_i \eta'_i=0 \ \forall \ i \label{nn22}\\
	 \sigma'_i\zeta'_i + \xi'_i\zeta'_i=0 \ \forall \ i \label{nn33}\\
	  \zeta'-\mathcal{A}{\xi'}^{m-1}=0 \label{nn44}
		\end{eqnarray}
			Using the equations \ref{nn22}, \ref{nn33} and \ref{nn44}, from equation \ref{nn11}, we obtain\\
		$	(m-1)(\zeta')^T(\xi'-\sigma')+(1-\bar{\mu})\xi'^T\zeta'=-\frac{\bar{\mu}}{1-\bar{\mu}}(\xi')^T\xi' \implies (m-1)((\zeta')^T\xi'+(\zeta')^T\xi')+(1-\bar{\mu})\xi'^T\zeta'=-\frac{\bar{\mu}}{1-\bar{\mu}}(\xi')^T\xi' \implies \zeta'^T\xi'= -\frac{\bar{\mu}}{2(m-1)(1-\bar{\mu})^2}(\xi')^T\xi'\leq 0,$ contradicts that $\xi, \zeta \geq 0.$ Hence the solution set  $\Gamma_v^{(0)}$ is bounded for $\bar{\mu}\in (0,1).$  Note that for $\bar{\mu}=0, \zeta^T\xi=0\implies \xi=0$ and for $\bar{\mu}=1,$ $\xi=0.$  This contradicts that $e^T\xi=1.$ Therefore the solution set  $\Gamma_v^{(0)}$ is bounded for $\bar{\mu}\in [0,1].$ \\  
		 
		\end{proof}
 Hence considering all the cases it is proved that  the solution set $\Gamma_v^{(0)}$ of the homotopy function \ref{homf}, $H(v,v^{(0)},\mu)=0$  is bounded for $\bar{\mu} \in [0,1]$.
For an initial point $v^{(0)}\in \mathcal{F}_1$ we obtain a smooth bounded homotopy path which  leads to  the solution of homotopy function \ref{homf} as the parameter $\mu \to 0.$

\begin{theorem}\label{3.7}
	For $v^{(0)}=(x^{(0)},w^{(0)},z_1^{(0)},z_2^{(0)})\in \mathcal{F}_1,$ the homotopy continuation method finds a bounded smooth curve $\Gamma_v^{(0)} \subset \mathcal{F}_1 \times (0,1]$ which starts from $(v^{(0)},1)$ and approaches the hyperplane at $\mu =0.$ As $\mu\to 0,$ the limit set $l \times \{0\} \subset \bar{\mathcal{F}}_1 \times \{0\}$ of $\Gamma_v^{(0)}$ is nonempty and every point in $l$ is a solution of the following system:
	\begin{equation}\label{sys}
	\begin{split}
	w-z_1+(m-1)(\hat{\mathcal{A}}x^{m-2})^T(x-z_2)=0 \\
 Z_1x=0\\
 Z_2w+Xw=0\\
 w-(\mathcal{A}x^{m-1} + q)=0\\
 	\end{split}
    \end{equation}
\end{theorem}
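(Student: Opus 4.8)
The plan is to classify the connected component $\Gamma_v^{(0)}$ as a one-dimensional manifold and then to pin down its two ends. By Theorems \ref{reg}, \ref{bdd} and \ref{001}, for almost every $v^{(0)}\in\mathcal{F}_1$ the value $0$ is a regular value of $H_{v^{(0)}}$ and $\Gamma_v^{(0)}$ is a bounded smooth curve contained in $\bar{\mathcal{F}}_1\times(0,1]$ with one endpoint at $(v^{(0)},1)$; by Lemma \ref{inv} it is a $1$-manifold, so by Lemma \ref{cl} it is diffeomorphic to a circle or to an interval. First I would rule out the circle: putting $\mu=1$ in \ref{homf} leaves $x=x^{(0)}$, $w=w^{(0)}$, $Z_1x^{(0)}=Z_1^{(0)}x^{(0)}$ and $Z_2w^{(0)}=Z_2^{(0)}w^{(0)}$, and since $x^{(0)},w^{(0)}>0$ this forces $v=v^{(0)}$, so $(v^{(0)},1)$ is the unique zero of $H_{v^{(0)}}$ on the face $\mu=1$; a short block reduction, analogous to the one in the proof of Theorem \ref{reg}, shows $\partial H/\partial v$ at $(v^{(0)},v^{(0)},1)$ is nonsingular, so the tangent to $\Gamma_v^{(0)}$ there is transversal to $\{\mu=1\}$ and $(v^{(0)},1)$ is a genuine endpoint. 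Hence $\Gamma_v^{(0)}$ is diffeomorphic to a half-open interval.

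Next I would follow $\Gamma_v^{(0)}$ away from $(v^{(0)},1)$ toward its other, open end. Boundedness makes the closure of $\Gamma_v^{(0)}$ in $\bar{\mathcal{F}}_1\times[0,1]$ compact, so the limit set $L$ of this end is nonempty; I would fix $(\bar v,\bar\mu)=(\bar x,\bar w,\bar z_1,\bar z_2,\bar\mu)\in L$ and show $\bar\mu=0$. The value $\bar\mu=1$ is impossible, since the only zero on $\{\mu=1\}$ is the starting endpoint. If $\bar\mu\in(0,1)$ and $\bar v\in\mathcal{F}_1$, then $0$ is a regular value at $(\bar v,\bar\mu)$, so by Lemma \ref{inv} the curve would extend smoothly through it, contradicting that this is an end. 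If $\bar\mu\in(0,1)$ and $\bar v\in\partial\mathcal{F}_1$, I would pass to the limit in the coordinatewise identities carried along $\Gamma_v^{(0)}$: from $Z_1x=\mu Z_1^{(0)}x^{(0)}$ one gets $\bar x_i\bar z_{1i}=\bar\mu x_i^{(0)}z_{1i}^{(0)}>0$, hence $\bar x_i>0$ and $\bar z_{1i}>0$ for all $i$; from $(z_{2i}+(1-\mu)x_i)w_i=\mu z_{2i}^{(0)}w_i^{(0)}$ the positivity of the right-hand side gives $\bar w_i>0$, and then $\bar z_{2i}>0$ for all $i$, so $\bar v\in\mathcal{F}_1$, a contradiction. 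Therefore $\bar\mu=0$, the path approaches the hyperplane $\mu=0$, and $L=l\times\{0\}\subset\bar{\mathcal{F}}_1\times\{0\}$ is nonempty.

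Finally, I would observe that each $(\bar v,0)\in L$ is a limit of points of $\Gamma_v^{(0)}\subset H_{v^{(0)}}^{-1}(0)$ with $\mu\to 0$, so $H(\bar v,v^{(0)},0)=0$ by continuity; substituting $\mu=0$ into \ref{homf} turns the four blocks into $w-z_1+(m-1)(\hat{\mathcal{A}}x^{m-2})^T(x-z_2)=0$, $Z_1x=0$, $Z_2w+Xw=0$ and $w-(\mathcal{A}x^{m-1}+q)=0$, which is exactly system \ref{sys}. The step I expect to be the main obstacle is the exclusion of the case $\bar\mu\in(0,1)$, $\bar v\in\partial\mathcal{F}_1$ --- in particular checking that the $z_2$-coordinates cannot degenerate to $0$ at positive $\mu$ --- since this is precisely what keeps the homotopy path from leaving $\mathcal{F}_1\times(0,1]$ through a face other than $\mu=0$; here the finite limits provided by the boundedness in Theorems \ref{bdd} and \ref{001} are exactly what make the argument go through.
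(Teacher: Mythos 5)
Your proposal follows essentially the same route as the paper's proof: classify $\Gamma_v^{(0)}$ via Lemma \ref{cl}, rule out the circle by the behaviour of \ref{homf} at $\mu=1$, use the boundedness supplied by Theorems \ref{reg}, \ref{bdd} and \ref{001} to force the free end of the curve down to the hyperplane $\mu=0$, and recover the system \ref{sys} by continuity as $\mu\to 0$. Your coordinatewise exclusion of the faces $x_i\to 0$, $w_i\to 0$ and $z_{1i}\to 0$ at $\bar\mu\in(0,1)$, via the limits $\bar x_i\bar z_{1i}=\bar\mu x_i^{(0)}z_{1i}^{(0)}>0$ and $(\bar z_{2i}+(1-\bar\mu)\bar x_i)\bar w_i=\bar\mu z_{2i}^{(0)}w_i^{(0)}>0$, is if anything cleaner than the paper's version, which instead argues that $x_i^k\to0$ or $w_i^k\to0$ would force $z_1^k\to\infty$ or $z_2^k\to\infty$, contradicting Theorem \ref{001}.

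The one step that does not close is exactly the one you flag: from $(\bar z_{2i}+(1-\bar\mu)\bar x_i)\bar w_i=\bar\mu z_{2i}^{(0)}w_i^{(0)}>0$ you may conclude $\bar w_i>0$ and $\bar z_{2i}+(1-\bar\mu)\bar x_i>0$, but not $\bar z_{2i}>0$; the positivity can be carried entirely by the term $(1-\bar\mu)\bar x_i\bar w_i$, so a limit point with $\bar z_{2i}=0$, $\bar x,\bar w,\bar z_1>0$ and $\bar\mu\in(0,1)$ is not excluded by these identities, and nothing else in your argument rules out the curve terminating on that face of $\partial\mathcal{F}_1$ at positive $\bar\mu$. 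To be fair, the paper's own proof does not treat this face either: its cases (ii)--(iii) consider only subsequences with $x_i^k\to0$ or $w_i^k\to0$, so your write-up is no weaker than the published argument. But a complete proof needs a separate reason why the path cannot end at a point with some $\bar z_{2i}=0$ while $\bar\mu$ remains positive; neither your limiting identities nor the boundedness theorems, as stated, supply one.
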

\begin{proof}
  	Note that in view of lemma \ref{cl}, $\Gamma_v^{(0)}$ is diffeomorphic to a unit circle or a unit interval $(0,1].$  Since $\frac{\partial{H(v,v^{(0)},1)}}{\partial{v^{(0)}}}$ is nonsingular, $\Gamma_v^{(0)}$ is diffeomorphic to a unit interval $(0,1].$ $\Gamma_v^{(0)}$ is a bounded smooth curve by the theorem \ref{reg} and \ref{001}. Let $(\bar{v},\bar{\mu})$ be a limit point of $\Gamma_v^{(0)}.$ Now we consider the following four cases: \\$(i)(\bar{v},\bar{\mu})\in \mathcal{F}_1 \times \{1\}:$ As the homotopy function $H(v,1)=0$ has only one solution $v^{(0)}\in  \mathcal{F}_1, $ this case is impossible.\\
  	$(ii)(\bar{v},\bar{\mu})\in \partial{\mathcal{F}_1} \times \{1\}:$ There exists a subsequence of $(v^k, \mu^k) \in \Gamma_v^{(0)}$ such that $x_i^k \to 0$ or $w_i ^k \to 0$ for $i \subseteq \{1,2,\cdots n\}.$ From the second and third equalities of the homotopy function \ref{homf}, we have $z_1^k \to \infty$ or $z_2^k \to \infty.$ Hence it contradicts the boundedness of the homotopy path by the theorem \ref{001}.\\
  	$(iii)(\bar{v},\bar{\mu})\in \partial{\mathcal{F}_1} \times (0,1):$ Also impossible followed by the case $(ii).$\\
  	$(iv)(\bar{v},\bar{\mu})\in \bar{\mathcal{F}}_1 \times \{0\}:$ The only possible case.\\

 \NI Hence $\bar{v}=(\bar{x},\bar{w},\bar{z_1},\bar{z_2})$ is a solution of the system \ref{sys}
 \begin{center}
    $	w-z_1+(m-1)(\hat{\mathcal{A}}x^{m-2})^T(x-z_2)=0 $\\
 $Z_1x=0$\\
 $Z_2w+Xw=0$\\
$w-(\mathcal{A}x^{m-1} + q)=0$\\
 \end{center}
 \end{proof}

\begin{remk}\label{mmf}
	So, from the homotopy function \ref{homf}  as $\mu \to 0$ we get $\bar{w}-\bar{z}_1$ $+(m-1)(\hat{\mathcal{A}}{\bar{x}}^{(m-2)})^T (\bar{x}-\bar{z}_2)=0,$ 
	$\bar{w}=\mathcal{A}{\bar{x}}^{(m-1)}+q$ and
$\bar{z}_{1i}\bar{x}_i=0$, $\bar{z}_{2i}\bar{w}_i=0, \bar{x}_{i}\bar{w}_i=0 \ \forall i\in\{1,2, \cdots n\}$. Hence $\mu \to 0$ we get the solution of the homotopy function as well as  TCP$(\mathcal{A},q)$.
\end{remk}

 \begin{remk}
	We find the homotopy path $\Gamma_v^{(0)} \subset \mathcal{{F}}_{(m_1)} \times (0,1]$ from the initial point $(v^{(0)},1)$ until $\mu \to 0$ and find the solution of the given complementarity problem \ref{TCP} under some assumptions. Let $s$ denote the arc length of $\Gamma_v^{(0)}.$  We can parameterize the homotopy path $\Gamma_v^{(0)}$ for $s$ in the following form\\
	\begin{equation}\label{ss}
	H (v(s),\mu (s))=0, \ 
	v(0)=v^{(0)}, \   \mu(0)=1.
	\end{equation} 
	Now differentiating \ref{ss} with respect to $s$ we obtain the following system of ordinary differential equations with given initial values\cite{fan}
	\begin{equation}
	H' (v(s),\mu (s))\left[\begin{array}{c} 
	\frac{dv}{ds}\\
	\frac{d\mu}{ds}\\
	\end{array}\right]=0, \
	\|( \frac{dv}{ds},\frac{d\mu}{ds})\|=1, \ 
	v(0)=v^{(0)}, \   \mu(0)=1, \ \frac{d\mu}{ds}(0)<0, 
	\end{equation} 
	and the $v$-component of $(v(\bar{s}),\mu (\bar{s}))$ gives the solution of the complementarity problem for $\mu (\bar{s})=0.$
\end{remk}

Now we use the modified homotopy continuation method to trace the homotopy path $\Gamma_v^{(0)}$ numerically to find the solution of TCP$(q,\mathcal{A})$. 

\begin{algorithm}
\caption{ \ \textbf{\textit{Modified Homotopy Continuation Method}}}\label{allgo}

\NI \textbf{Step 0:} Initialize $(v^{(0)},\mu_0)$ and a natural number $m\in(0,50).$ Set $l_0 \in (0, 1).$ Choose $\epsilon_2 >> \epsilon_3 >> \epsilon_1 > 0$ which are very small positive values.
\vsp 
\NI \textbf{Step 1:} $\tau^{(0)}= \xi^{(0)}=(\frac{1}{n_0})\left[\begin{array}{c} 
s^{(0)}\\
-1\\
\end{array}\right]$ for $i=0,$ where $n_0=\|\left[\begin{array}{c} 
s^{(0)}\\
-1\\
\end{array}\right]\|$ and $s^{(0)}= (\frac{\partial H}{\partial v}(v^{(0)},\mu_0))^{-1}(\frac{\partial H}{\partial \mu}(v^{(0)},\mu_0)).$ \\ For $i>0,$ $s^{(i)}= (\frac{\partial H}{\partial v}(v^{(i)},\mu_i))^{-1}(\frac{\partial H}{\partial \mu}(v^{(i)},\mu_i)),$ $n_i=\|\left[\begin{array}{c} 
s^{(i)}\\
-1\\
\end{array}\right]\|,$ $\xi^{(i)}=(\frac{1}{n_i})\left[\begin{array}{c} 
s^{(i)}\\
-1\\
\end{array}\right]$.\\ 
If $\det  (\frac{\partial H}{\partial v}(v^{(i)},\mu_i))>0,$ $\tau^{(i)}= \xi^{(i)}$ else $\tau^{(i)}= -\xi^{(i)},$ $i \geq 1.$\\ Set $l=0.$
\vsp
\NI \textbf{Step 2:} (Predictor and corrector point calculation) $(\tilde{v}^{(i)},\tilde{\mu}_i)=(v^{(i)},\mu_i)+a\tau^{(i)},$ where $a={l_0}^l.$ Compute $(\hat{v}^{(i)},\hat{\mu}_{i})=H'_{v^{(0)}}(\tilde{v}^{(i)},\tilde{\mu}_i)^+ H(\tilde{v}^{(i)}, \tilde{\mu}_i)$ and $(\bar{v}^{(i)},\bar{\mu}_{i})=(\tilde{v}^{(i)},\tilde{\mu}_i)-(\hat{v}^{(i)},\hat{\mu}_{i}).$ Now compute $(\hat{vv}^{(i)},\hat{\mu\mu}_{i})=(H'_{v^{(0)}}(\tilde{v}^{(i)},\tilde{\mu}_i)+H'_{v^{(0)}}(\bar{v}^{(i)},\bar{\mu}_i))^+ H(\tilde{v}^{(i)}, \tilde{\mu}_i)$ and $(\bar{vv}^{(i)},\bar{\mu\mu}_{i})=(\tilde{v}^{(i)},\tilde{\mu}_i)-2(\hat{vv}^{(i)},\hat{\mu\mu}_{i}).$ \\Compute $(v^{(i+1)},\mu_{i+1})=(\bar{vv}^{(i)},\bar{\mu\mu}_{i})-H'_{v^{(0)}}(\bar{v}^{(i)},\bar{\mu}_i)^+H(\bar{vv}^{(i)},\bar{\mu\mu}_{i}).$ \\Continue the method from the computation of $(\tilde{v}^{(i)},\tilde{\mu}_{i})$ to the computation of $(v^{(i+1)},\mu_{i+1})$ for $m$ times. In each step after repeating the computation for $m$ times, can obtain the value for next iteration $(v^{(i+1)},\mu_{i+1})$.  \\ If $0<\|{\mu_{i+1}-\mu_i}\|<1, $ go to step 3. Otherwise if $m' = \min(a,\|({v}^{(i+1)},{\mu}_{i+1})-({v}^{(i)},{\mu}_{i})\|)>a_0,$ update $l$ by $l+1,$ and recompute $(\tilde{\mu}_i, \hat{\mu}_{i})$ else go to step 3.
\vsp
\NI \textbf{Step 3:} Determine the norm $r=\|H(v^{(i+1)},\mu_{i+1})\|.$ If $r \leq 1$ and $v^{(i+1)}>0$ go to step 5, otherwise if $a > \epsilon_3,$ update $l$ by $l+1$ and go to step 2 else go to step 4.
\vsp
\NI \textbf{Step 4:} If $|\mu_{i+1} - \mu_i| < \epsilon_2,$ then if $|\mu_{i+1}| < \epsilon_2,$ then stop with the solution $(v^{(i+1)},\mu_{i+1}),$ else terminate (unable to find solution) else $i=i+1$ and go to step 1.
\vsp
\NI \textbf{Step 5:} If $|\mu_{i+1}| \leq \epsilon_1,$ then stop with solution $(v^{(i+1)},\mu_{i+1}),$ else $i=i+1$ and go to step 1.
\end{algorithm}

\NI Note that in step 2, $H'_{v^{(0)}}(v,\mu)^+ = H'_{v^{(0)}}(v,{\mu})^{T}(H'_{v^{(0)}}(v,{\mu})H'_{v^{(0)}}(v,\mu)^{T})^{-1}$ is the Moore-Penrose inverse of $H'_{v^{(0)}}(v,\mu).$ 
\vsp
 
\NI Now we obtain the sign of the positive tangent direction of the homotopy path.

\begin{theorem}
	If the homotopy curve $\Gamma_v^{(0)}$ is smooth, then the positive predictor direction $\tau^{(0)}$ at the initial point $v^{(0)}$ satisfies sign($\det \left[\begin{array}{c}
	\frac{\partial H}{\partial v \partial \mu}(v,v^{(0)},1)\\
	\tau^{(0)^T}\\
	\end{array}\right]$)$<0.$ 
\end{theorem}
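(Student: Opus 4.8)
The plan is to compute everything explicitly at the starting point $(v^{(0)},1)$, where $H$ degenerates to an affine map, and then read off the sign from a Schur-complement determinant identity. First I would read the matrix $\frac{\partial H}{\partial v\partial\mu}(v,v^{(0)},1)$ in the statement as the full Jacobian $H'_{v^{(0)}}(v^{(0)},1)=\left[\,\frac{\partial H}{\partial v}(v^{(0)},1)\ \big|\ \frac{\partial H}{\partial\mu}(v^{(0)},1)\,\right]$, a $4n\times(4n+1)$ matrix; appending the row $\tau^{(0)T}$ underneath produces a square $(4n+1)\times(4n+1)$ matrix $M$, and it is $\mathrm{sign}(\det M)$ that is at stake. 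By Theorem \ref{reg}, $0$ is a regular value of $H$, so $H'_{v^{(0)}}(v^{(0)},1)$ has full row rank $4n$; its kernel is the one-dimensional line spanned by the unit tangent to $\Gamma_v^{(0)}$ at $(v^{(0)},1)$, and $\tau^{(0)}$ is that generator, oriented as in Step 1 of Algorithm \ref{allgo}.

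The key step is the block form of $\frac{\partial H}{\partial v}$ at the start. Substituting $\mu=1$ in \ref{homf} collapses $H$ to the affine map $\big(x-x^{(0)},\ Z_1x-Z_1^{(0)}x^{(0)},\ Z_2w-Z_2^{(0)}w^{(0)},\ w-w^{(0)}\big)$ (the $(1-\mu)Xw$ and $(1-\mu)(\mathcal{A}x^{m-1}+q)$ terms drop out), so differentiating at $v=v^{(0)}$ gives
\[
A:=\frac{\partial H}{\partial v}(v^{(0)},1)=\begin{bmatrix} I & 0 & 0 & 0\\ Z_1^{(0)} & 0 & X^{(0)} & 0\\ 0 & Z_2^{(0)} & 0 & W^{(0)}\\ 0 & I & 0 & 0\end{bmatrix}.
\]
Clearing the $(2,1)$ block with the first block-row and the $(3,2)$ block with the last block-row and then permuting the column blocks, one obtains $\det A=\prod_{i=1}^{n}x_i^{(0)}w_i^{(0)}>0$, since $v^{(0)}\in\mathcal{F}_1$ forces $x^{(0)},w^{(0)}\in\mathbb{R}^n_{++}$ (one checks that the column-block permutation involved is even, so it contributes no sign). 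In particular $A$ is nonsingular, which is exactly what makes $s^{(0)}=A^{-1}\frac{\partial H}{\partial\mu}(v^{(0)},1)$ well defined in Step 1 and forces the kernel generator to have nonzero $\mu$-component; and since $\det A>0$, Step 1 takes $\tau^{(0)}=\xi^{(0)}=\frac{1}{n_0}\big[(s^{(0)})^T,\,-1\big]^T$ with $n_0=\sqrt{1+\|s^{(0)}\|^2}$.

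It remains to evaluate $\det M$ by blocks. Writing $b:=\frac{\partial H}{\partial\mu}(v^{(0)},1)$ so that $s^{(0)}=A^{-1}b$, we have
\[
M=\begin{bmatrix} A & b\\[2pt] \tfrac{1}{n_0}(A^{-1}b)^T & -\tfrac{1}{n_0}\end{bmatrix},\qquad \det M=\det A\cdot\Big(-\tfrac{1}{n_0}-\tfrac{1}{n_0}\|A^{-1}b\|^2\Big)=-\sqrt{1+\|s^{(0)}\|^2}\,\det A,
\]
by the Schur-complement identity (valid since $A$ is invertible) together with $n_0^2=1+\|s^{(0)}\|^2$. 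Since $\det A>0$, this yields $\mathrm{sign}(\det M)<0$, which is the assertion; in particular $M$ is nonsingular. The only genuine work here is the bookkeeping in the second paragraph — the block form of $A$ at $\mu=1$ and the parity of the column-block permutation — together with keeping the sign straight in the Schur complement; no analytic ingredient beyond Theorem \ref{reg} enters. (The same computation applied at a general point of $\Gamma_v^{(0)}$, with the sign rule $\tau=\pm\xi$ of Algorithm \ref{allgo}, shows that $\det M$ keeps a constant negative sign along the whole path.)
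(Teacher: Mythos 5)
Your proposal is correct and follows essentially the same route as the paper: evaluate $\frac{\partial H}{\partial v}(v^{(0)},1)$ as the block matrix with determinant $\prod_{i=1}^{n}x_i^{(0)}w_i^{(0)}>0$, take $\tau^{(0)}$ proportional to $\bigl[(A^{-1}b)^T,\,-1\bigr]^T$, and apply the Schur-complement (block elimination) identity to get $\det M=-\bigl(1+\|A^{-1}b\|^2\bigr)\cdot(\text{positive factor})\cdot\det A<0$. The only differences are cosmetic: you carry the normalization $1/n_0$ and leave $b=\frac{\partial H}{\partial\mu}(v^{(0)},1)$ unexpanded, whereas the paper writes out its blocks $A_1,B_1,C_1,D_1$, which play no role in the sign.
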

\begin{proof}
	From equation \ref{homf} we have
	
	$H(v,v^{(0)},\mu)=\\$
	$\left[\begin{array}{c} 
 (1-\mu)(w-z_1+(m-1)(\hat{\mathcal{A}}x^{m-2})^T(x-z_2))+\mu(x-x^{(0)}) \\
 Z_1x-\mu Z_1^{(0)}x^{(0)}\\
 Z_2w-\mu Z_2^{(0)}w^{(0)}+(1-\mu)Xw\\
 w-(1-\mu)(\mathcal{A}x^{m-1} + q)-\mu w^{(0)}\\
  \end{array}\right]=0.$
 \vsp
	 Now at the initial point $(v^{(0)},1)$ the patial derivative of the homotopy function \ref{homf} is given by, 	$\frac{\partial H}{\partial v \partial \mu}(v,\mu)=\begin{bmatrix}
	L_5 & L_6
	\end{bmatrix},$ where\\
	$L_5=\begin{bmatrix} I & 0 & 0 & 0 \\
	Z_1^{(0)} & 0 & X^{(0)} & 0 \\
	0 &  Z_2 ^{(0)} & 0 & W^{(0)} \\ 
	0 & I & 0 & 0 \\
	 \end{bmatrix}.$\\  $L_6=\begin{bmatrix}
	A_1\\
	B_1\\
	C_1\\
	D_1\\
		\end{bmatrix},$ $X^{(0)}=\text{diag}(x^{(0)}), W^{(0)}=\text{diag}(w^{(0)})$, $Z_1^{(0)}=\text{diag}(z_1^{(0)})$, $Z_2^{(0)}=\text{diag}(z_2^{(0)}),\\A_1=(x-x^{(0)})-[w-z_1+(m-1)(\hat{\mathcal{A}}x^{m-2})^T(x-z_2)], B_1=-Z_1^{(0)}x^{(0)}, C_1=-Z_2^{(0)}w^{(0)}-X^{(0)}w^{(0)}, D_1= \mathcal{A}x^{m-1}-w^{(0)}. $\\
	Let positive predictor direction be $\tau^{(0)}=\left[\begin{array}{c}
	t \\ -1
	\end{array}\right]=\left[\begin{array}{c}
	(\mathbb{R}^{(0)}_1)^{(-1)}R_2^{(0)} \\ -1
	\end{array}\right],$\\ where $\mathbb{R}^{(0)}_1=
\begin{bmatrix} 
   I & 0 & 0 & 0 \\
	Z_1^{(0)} & 0 & X^{(0)} & 0 \\
	0 &  Z_2 ^{(0)} & 0 & W^{(0)} \\ 
	0 & I & 0 & 0 \\
	 \end{bmatrix}$\\ 
	  and $\mathbb{R}^{(0)}_2=\left[\begin{array}{c} 
	A_1\\
	B_1\\
	C_1\\
	D_1\\
		\end{array}\right],$ where $A_1=(x-x^{(0)})-[w-z_1+(m-1)(\hat{\mathcal{A}}x^{m-2})^T(x-z_2)], B_1=-Z_1^{(0)}x^{(0)}, C_1=-Z_2^{(0)}w^{(0)}-Xw, D_1= \mathcal{A}x^{m-1}-w^{(0)}.$\\
	Here  \ $\text{det}(\mathbb{R}^{(0)}_1)=	\prod_{i=1}^{n} x_i^{(0)}w_i^{(0)}> 0.$ \\
	\vsp
	Therefore, 
	$\det\left[\begin{array}{c}
	\frac{\partial H}{\partial v \partial \mu}(v,v^{(0)},1)\\
	\tau ^{(0)^T}\\
	\end{array}\right]$ $=\det\left[\begin{array}{cc}
	\mathbb{R}^{(0)}_1 & \mathbb{R}^{(0)}_2\\
	(\mathbb{R}^{(0)}_2)^T(\mathbb{R}^{(0)}_1)^{(-t)} & -1\\	
	\end{array}\right]$ \\ $= \det\left[\begin{array}{cc}
	\mathbb{R}^{(0)}_1 & \mathbb{R}^{(0)}_2\\
	0 & -1-(\mathbb{R}^{(0)}_2)^T(\mathbb{R}^{(0)}_1)^{(-t)}(\mathbb{R}^{(0)}_1)^{(-1)}R_2^{(0)} \\	
	\end{array}\right] \\ =\det(\mathbb{R}^{(0)}_1)\det(-1-(\mathbb{R}^{(0)}_2)^T(\mathbb{R}^{(0)}_1)^{(-t)}(\mathbb{R}^{(0)}_1)^{(-1)}R_2^{(0)}) \\ =-\det(\mathbb{R}^{(0)}_1)\det(1+(\mathbb{R}^{(0)}_2)^T(\mathbb{R}^{(0)}_1)^{(-t)}(\mathbb{R}^{(0)}_1)^{(-1)}R_2^{(0)}) \\ =\prod_{i=1}^{n} x_i^{(0)}w_i^{(0)}\det(1+(\mathbb{R}^{(0)}_2)^T(\mathbb{R}^{(0)}_1)^{(-t)}(\mathbb{R}^{(0)}_1)^{(-1)}R_2^{(0)})<0. $ 
\end{proof}
\section{Numerical Examples}
\begin{examp}
Let consider a column sufficient tensor $\mathcal{A}\in T_{4,2}$ sucth that $a_{1112}=-2, a_{2111}=1, a_{2222}=1.$ Other entries are zero. $q=\left[\begin{array}{c}
	1\\
  -1\\	
	\end{array}\right].$ The initial point is $v^{(0)}=(1,1,1,1,1,1,1,1,1).$ After $10$ iterations the solution of the tensor complementarity problem is $z=\left[\begin{array}{c}
	.7937\\
  .7937\\	
	\end{array}\right]$ and $w=\left[\begin{array}{c}
	0\\
  0\\	
	\end{array}\right].$
	\end{examp}
	
	\begin{examp}
	Let consider a column competent tensor $\mathcal{A}\in T_{3,2}$ sucth that $a_{112}=1, a_{121}=1, a_{122}=1, a_{212}=1, a_{221}=1, a_{222}=1.$ Other entries are zero. $q=\left[\begin{array}{c}
	-2\\
  1\\	
	\end{array}\right].$ The initial point is $v^{(0)}=(1,1,1,1,1,1,1,1,1).$ After $51$ iterations the solution of the tensor complementarity problem is $z=\left[\begin{array}{c}
	1697.278\\
  0\\	
	\end{array}\right]$ and $w=\left[\begin{array}{c}
	0\\
  3\\	
	\end{array}\right].$
	\end{examp}
	\begin{examp}
		Let consider a tensor $\mathcal{A}\in T_{4,2}$ such that $a_{1111}=1, a_{1112}=-1, a_{2111}=1.$ All other entries are zero. $q=\left[\begin{array}{c}
	1\\
  -1\\	
	\end{array}\right].$ It is column adequate tensor.
	The initial point is $v^{(0)}=(1,1,1,1,1,1,1,1,1).$ After $51$ iterations the solution reaches to $z=\left[\begin{array}{c}
	1\\
  2\\	
	\end{array}\right]$ and $w=\left[\begin{array}{c}
	0\\
  0\\	
	\end{array}\right].$
	\end{examp}
	\begin{examp}
		Let consider a $P_0$-tensor $\mathcal{A}\in T_{4,2}$ sucth that $a_{1111}=2, a_{1112}=1, a_{2122}=4, a_{2222}=2.$ All other entries are zero. $q=\left[\begin{array}{c}
	-1\\
  -1\\	
	\end{array}\right].$  The initial point is $v^{(0)}=(1,1,1,1,1,1,1,1,1).$ After $11$ iterations the solution of the tensor complementarity problem is $z=\left[\begin{array}{c}
	0.717516\\
  0.50706\\	
	\end{array}\right]$ and $w=\left[\begin{array}{c}
	0\\
  0\\	
	\end{array}\right].$
	\end{examp}
		\begin{examp}
		Let consider a strictly semi positive tensor $\mathcal{A}\in T_{3,2}$ sucth that $a_{111}=1, a_{121}=2, a_{122}=1, a_{222}=1, a_{211}=-1, a_{221}=-1.$ All other entries are zero. But this tensor is not strong strictly semi positive. $q=\left[\begin{array}{c}
	-1.5\\
  1\\	
	\end{array}\right].$  The initial point is $v^{(0)}=(1,1,1,1,1,1,1,1,1).$ After $16$ iterations the solution  is $z=\left[\begin{array}{c}
	0.901703\\
  0.3230419\\	
	\end{array}\right]$ and $w=\left[\begin{array}{c}
	0\\
  0\\	
	\end{array}\right].$
	\end{examp}
	\begin{examp}
		Let consider a  semi positive tensor $\mathcal{A}\in T_{3,2}$ sucth that $a_{111}=1, a_{112}=-3, a_{122}=1, a_{222}=1, a_{211}=1, a_{212}=-2.$ All other entries are zero. But this tensor is neither  strictly semi positive nor strong strictly semipositive . $q=\left[\begin{array}{c}
	-2\\
  -1\\	
	\end{array}\right].$  The initial point is $v^{(0)}=(1,1,1,1,1,1,1,1,1).$ After $19$ iterations the solution of the tensor complementarity problem reaches to $z=\left[\begin{array}{c}
	1.414214\\
  0\\	
	\end{array}\right]$ and $w=\left[\begin{array}{c}
	0\\
  1\\	
	\end{array}\right].$
	\end{examp}

\section{Conclusion}
In this study we introduce a new homotopy function to process the tensor complementarity problem under some conditions.  We show that the proposed function is smooth and the homotopy path is bounded. We prove necessary and sufficient conditions to obtain the solution of tensor complementarity problem from the solution of the homotopy equation. In this connection we obtain the sign of the positive tangent direction of the homotopy path. We construct a homotopy continuation method and show that the method reaches to solution. Finally, several numerical illustrations are presented to show the effectiveness of the proposed approach.

\section{Acknowledgment}
The author A. Dutta is thankful to the Department of Science and Technology, Govt. of India, INSPIRE Fellowship Scheme for financial support. The author Bharat Kumar is thankful to the University Grant Commission (UGC),  Government of India under the JRF programme no.  1068/(CSIR-UGC NET DEC. 2017 ).

\bibliographystyle{plain}
\bibliography{lisi}
\end{document}